\numberwithin{equation}{section}
\def\3bar{{|\hspace{-.02in}|\hspace{-.02in}|}}
\def\E{{\mathcal{E}}}
\def\T{{\mathcal{T}}}
\def\dQ{{\mathbb{Q}}}
\def\b0{\boldsymbol{0}}
\def\sumT{\sum_{T\in\mathcal{T}_h}}     
\def\bn{{\mathbf{n}}}
\def\bf{{\mathbf{f}}}
\def\bq{{\mathbf{q}}}
\newtheorem{remark}{Remark}[section]
\newtheorem{algorithm1}{Weak Galerkin Algorithm}
 \newcommand{\eps}{\varepsilon}
 \newcommand{\Real}{\mathbb{R}}
 \newcommand{\trb}[1]{|\!|\!|#1|\!|\!|}
 \newcommand{\tdu}{\hat u_h}
\begin{document}
\title{The Shifted-inverse Power Weak Galerkin Method for Eigenvalue Problems}

\author{
Qilong Zhai\thanks{Department of Mathematics, Jilin University, Changchun, 130012,
China (diql15@mails.jlu.edu.cn).}
\and Xiaozhe Hu\thanks{Department of Mathematics, Tufts University, Medford, 02155, USA (Xiaozhe.Hu@tufts.edu).}
\and Ran Zhang\thanks{Department of Mathematics, Jilin University, Changchun, 130012, China
(zhangran@mail.jlu.edu.cn). The research was supported in part by China Natural National Science Foundation
(U1530116,  91630201),  and by the Program for New Century Excellent Talents in University of Ministry of
Education of China, Program for Cheung Kong Scholars, Key Laboratory of Symbolic Computation and Knowledge Engineering of
Ministry of Education, Jilin University, Changchun, 130012, P.R. China}
}

\maketitle

\begin{abstract}
This paper proposes and analyzes a new weak Galerkin method for the eigenvalue problem by using the shifted-inverse power technique.  A high order lower bound can be obtained at a relatively low cost via the proposed method. The error estimates for both eigenvalue and eigenfunction are provided and asymptotic lower bounds are shown as well under some conditions.  Numerical examples are presented to validate the theoretical analysis.
\end{abstract}

\begin{keywords} weak Galerkin finite element method, eigenvalue problem,
shifted-inverse power method,
lower bound.
\end{keywords}

\begin{AMS}
Primary, 65N30, 65N15, 65N25; Secondary, 35B45, 35J50, 35J35
\end{AMS}

\section{Introduction}
The eigenvalue problems have drawn much attention during the past several decades and have wide applications in physical and industrial fields, such as quantum mechanics, fluid mechanics, stochastic process, structural mechanics. More applications of eigenvalue problems are illustrated in \cite{Grebenkov2013} and the references therein.

Many numerical methods have been developed for solving eigenvalue problems, such as finite difference method \cite{Kuttler1974a,MR967886}, finite element method \cite{MR1115240,Babuska1989}, spectral method \cite{Min2004}, and discontinuous Galerkin method \cite{Cliffe2010}. However, there are still two difficulties in solving eigenvalue problems.  One is that the eigenvalue problem is a fully nonlinear problem and the computational cost is very high. Therefore it is important to design algorithms to reduce the computational complexity. The other difficulty is getting a lower bound of an eigenvalue. Due to the minimum-maximum principle, the conforming finite element approximations always produce upper bound of the exact eigenvalue.  If a lower bound is given, then we can get a interval to which the eigenvalue belongs and derive a more accurate approximate eigenvalue.

Numerical techniques have been developed to accelerate the computation of the eigenvalue problems. A two-grid method was firstly proposed by Xu in \cite{Xu1994} for semi-linear partial differential equations (PDEs).  It was soon been applied to nonlinear PDEs \cite{Xu1996} and the eigenvalue problems \cite{Xu2001}. The main idea of the two-grid method is to solve the eigenvalue problem on a coarse grid and a linear problem on a fine grid, instead of solving the eigenvalue problem on the fine grid directly.  Meanwhile, the asymptotic convergence rate is maintained as long as fine grid mesh size $h$ and coarse grid mesh size $H$ are chosen properly.  For example, for the Laplacian eigenvalue problem, the ratio of mesh sizes of two grids can be $H=\sqrt h$, which shall greatly reduces the computation cost.  The two-grid method has also been used in many other problems \cite{Xu2000,Yang2009}, and some multigrid methods have also been proposed \cite{Chen2016,Ji2013,Xie2014}.

Based on the two-grid method, a shifted-inverse power technique was developed \cite{Hu2011,Yang2011}, which further reduces the computational cost because the coarse grid mesh size can be chosen as $H=\sqrt[4]h$.  The shifted-inverse power technique can also be combined with other numerical methods, such as multigrid method \cite{Chen2015,Emi2013} and adaptive algorithm \cite{Bi2016}, which can solve eigenvalue problems more efficiently.

On the other hand, since the conforming finite element methods fail to produce a lower bound for the eigenvalues naturally, a variety of non-standard finite element methods have been developed.  A posterior analysis was proposed to provide a lower bound \cite{Canc2015,Larson2000}.  Many non-conforming elements have also been studied for the lower bound problem, such as Wilson's element, $EQ_1^{rot}$ element, and GCR element\cite{Lin2008}.  Some criterions for non-conforming elements have been studied in \cite{Hu2011a,Hu2014b,Hu2014c} and some numerical methods of getting both upper and lower bounds have been discussed in \cite{Hu2014d}.

Among the numerous methods above, the weak Galerkin (WG) method is also a candidate for solving the lower bound problem. The weak Galerkin finite element method was proposed by Wang and Ye in \cite{Wang2013a} and can be applied on polytopal/polyhedra mesh. The key of weak Galerkin method is to employ discontinuous basis functions and use specifically defined weak derivatives to replace the classical derivatives.  The weak Galerkin method has been applied to many types of PDEs, such as biharmonic equation \cite{Oden2007,Mu2013,Zhang2015}, Stokes equation \cite{Wang2015b,Zhai2015}, Brinkman equation \cite{Mu,Wang2016a,Zhai2016}, and Maxwell equation \cite{Mu2013d}. In \cite{Xie2015}, the weak Galerkin method has been used to solve the Laplacian eigenvalue problems and provide asymptotic lower bounds of arbitrary high order.

In this paper, we combine the shifted-inverse power technique with the weak Galerkin method.  The shifted-inverse power technique reduces the computational cost of weak Galerkin method, while the weak Galerkin method provides a lower bound estimate under certain conditions.  Therefore, by combining the weak Galerkin method with the shifted-inverse power method, we are able to get a high order lower bound efficiently.

This paper is constructed as follows. In Section 2, the weak Galerkin scheme in the general setting is introduced.  Section 3 is devoted to the error analysis for the shifted-inverse power weak Galerkin method.  In Section 4, the application of the proposed method to Laplacian and biharmonic eigenvalue problems are analyzed.  Numerical experiments are presented in Section 5.  

\section{A weak Galerkin scheme}
In this section, we introduce the weak Galerkin scheme for the eigenvalue problem \eqref{problem-eq1} and the weak Galerkin scheme based on the shifted-inverse power technique.

We first introduce some notations and definitions.  Suppose $\{V,(\cdot,\cdot)_a\}$ is a Hilbert space and $(\cdot,\cdot)_b$ is another inner-product on $V$.  Let $W$ be the completion of $V$ with respect to $(\cdot,\cdot)_b$, then $\{W,(\cdot,\cdot)_b\}$ is also a Hilbert space.  Assume $V$ is compact embedded into $W$.  Denote $a(w,v) = (w,v)_a$ and $b(p,q)=(p,q)_b$, then $a(\cdot,\cdot)$ and $b(\cdot,\cdot)$ are
symmetric bilinear forms on $V$ and $W$, respectively.

For any $u\in W$, by the Riesz representation theorem, there exists a unique $Au\in V$ such that
\begin{eqnarray*}
  a(Au,v) = b(u,v),\quad\forall v\in V,
\end{eqnarray*}
which define a linear compact operator $A: W\rightarrow W$.
Similarly, we define a bounded linear operator $L: V\rightarrow W$ satisfying
\begin{eqnarray*}
  a(u,v) = b(Lu,v),\quad\forall v\in V.
\end{eqnarray*}

For a Banach space $X$ and its closed subspaces $M$ and $N$, define
the distances as follows,
\begin{equation*}
dist(x,N)=\inf_{y\in N}\|x-y\|, \ dist(M,N)=\sup_{\substack{x\in M,\\\|x\|=1}} dist(x,N),
\end{equation*}


Based on those definitions, we consider the following eigenvalue problem
\begin{eqnarray*}
  L u =\lambda u,
\end{eqnarray*}
which can also be written as the following the variational form: Find $u\in V$, $\lambda\in\Real$, such that $b(u,u)=1$ and
\begin{eqnarray}\label{problem-eq1}
  a(u,v) = \lambda b(u,v),\quad\forall v\in V.
\end{eqnarray}

Now, we introduce the weak Galerkin method for problem (\ref{problem-eq1}).  Define $V_h$ the weak Galerkin finite element space.  Note that $V_h$ consists discontinuous piecewise polynomials and is not a subspace of $V$.  Denote $Q_h$ the projection operator from $V$ onto $V_h$. \textcolor{red}{(Q:  is this the projection with respect to the $b$ bilinear form? I think we need to specify it.)}  Let $a_w(\cdot,\cdot)$ and $b_w(\cdot,\cdot)$ be two bilinear forms on $V_h$ and $\trb v^2 =a_w(v,v)$ defines a norm on $V_h$ and $\|v\|^2 = b_w(v,v)$ defines a semi-norm on $V_h$.  Then, $a_w(\cdot,\cdot)$ is bounded and coercive.  The original weak Galerkin algorithm for eigenvalue problem (\ref{problem-eq1}) is as follows,
\begin{algorithm1}\label{wg-alg}
Find $(\lambda_h,u_h)\in \Real\times V_h$, such that
$b_w(u_h,u_h)=1$ and
\begin{eqnarray}\label{wg-scheme}
a_w(u_h,v) = \lambda_h b_w(u_h,v),\quad\forall v\in V_h.
\end{eqnarray}
\end{algorithm1}

For the $i$-th eigenvalue $\lambda$ of problem \eqref{problem-eq1} with multiplicity $N_i$,  we denote the corresponding eigenfunction by $\{u_{j}\}_{j=1}^{N_i}$.  The corresponding WG approximation are denoted by $\{(\lambda_{j,h},u_{j,h})\}_{j=1}^{N_i}$.  Let $M = \text{span}\{ u_{1}, u_2, \cdots, u_{N_i} \}$ be the eigenspace of $\lambda_i$ and $M_h = \text{span}\{ u_{1,h}, u_{2,h}, \cdots, u_{N_i,h} \}$ be the corresponding WG approximation.
Define
\begin{eqnarray}
&&\delta_h=\max_{1\le j\le N_i}|\lambda-\lambda_{j,h}|, \label{def:delta}
\\
&&\sigma_h=\min_{1\le j\le N_i}|\lambda-\lambda_{j,h}|, \label{def:sigma}
\\
&&\eta_h=\max_{1\le j\le N_i} \min_{u\in M}\|u_{j,h}-Q_h u\|, \label{def:eta}
\\
&&\gamma_h=\max_{1\le j\le N_i} \min_{u\in M}\trb{u_{j,h}-Q_hu}. \label{def:gamma}
\end{eqnarray}
Note, in the rest, we might replace the subscript $h$ by $H$ when those quantities are defined on $H$.

Next we introduce the weak Galerkin algorithm based on
the shifted-inverse power technique. The algorithm is illustrated
as follows.
\begin{algorithm1}\label{shift-alg}
Step 1. Solve an eigenvalue problem on coarse grid:  Find $(\lambda_H,u_H)\in \Real\times V_H$, such that
$b_w(u_H,u_H)=1$ and
\begin{eqnarray}\label{wg-step1}
a_w(u_H,v_H) = \lambda_H b_w(u_H,v_H),\quad\forall v_H\in V_H.
\end{eqnarray}

Step 2. Solve a linear system on fine grid:  Find $\tilde u_h\in V_h$ such that
\begin{eqnarray}\label{wg-step2}
a_w(\tilde u_h,v_h)-\lambda_H b_w(\tilde u_h,v_h) = b_w(u_H,v_h),\quad\forall v_h\in V_h.
\end{eqnarray}

Step 3. Calculate the Rayleigh quotient,
\begin{eqnarray}\label{wg-step3}
\tilde\lambda_h=\dfrac{a_w(\tilde u_h,\tilde u_h)}{b_w(\tilde u_h,\tilde u_h)}.
\end{eqnarray}
\end{algorithm1}

\section{Error analysis}
In this section, we shall establish the convergence analysis for Algorithm \ref{shift-alg}.  Moreover,  under certain conditions, the lower bound estimate of the approximate eigenvalues is also derived.

Assume $\{(\lambda_{j,h}, u_{j,h})\}_{j=1}^{N_i}$ are the approximations, corresponding to the $i$-th eigenvalue $\lambda$ of the eigenvalue problem \eqref{problem-eq1} with multiplicity $N_i$, obtained by the WG scheme \eqref{wg-scheme}.  Therefore, we have,
\begin{eqnarray*}
a_w(u_{j,h},v_h) = \lambda_{j,h} b_w(u_{j,h},v_h),\quad\forall v_h\in V_h.
\end{eqnarray*}

Next we introduce several technique tools for the error estimate.  Firstly, the following lemma plays an essential role in the convergence analysis and its detailed proof can be found in \cite{Hu2011}, Lemma 1.
\begin{lemma}\label{technique-1}
Assume $\mu\neq\lambda_{j,h}$ for all $j=1,\cdots,N_i$, when $h$ is small enough we have
\begin{eqnarray*}
a_w(v,v)-\mu b_w(v,v)\ge C_\rho \trb{v}^2,\quad\forall v\in M_h^\perp,
\end{eqnarray*}
where $C_\rho$ only depends on $\lambda$ and $M_h^\perp$ is the
orthogonal complement space of $M_h$ in $V_h$.
\end{lemma}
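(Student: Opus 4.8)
The plan is to diagonalize the pair of forms and reduce the claim to a scalar spectral-gap inequality. Since $a_w(\cdot,\cdot)$ is symmetric and coercive and $b_w(\cdot,\cdot)$ is symmetric and positive semidefinite on the finite-dimensional space $V_h$, the generalized eigenvalue problem \eqref{wg-scheme} furnishes a basis of $V_h$ consisting of eigenfunctions that are simultaneously orthogonal with respect to both $a_w$ and $b_w$. Normalizing so that $b_w(u_{k,h},u_{k,h})=1$ and $a_w(u_{k,h},u_{k,h})=\lambda_{k,h}$, the space $M_h^\perp$ is exactly the span of those eigenfunctions whose eigenvalues differ from $\lambda_i$; I would take the complement in the $a_w$ (equivalently $b_w$) inner product, which is consistent because the eigenbasis is orthogonal in both.

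First I would write an arbitrary $v\in M_h^\perp$ as $v=\sum_{k} c_k u_{k,h}$, the sum ranging over indices $k$ with $\lambda_{k,h}\neq\lambda_i$. Using the orthogonality relations this gives $\trb{v}^2=a_w(v,v)=\sum_k c_k^2\lambda_{k,h}$ and $b_w(v,v)=\sum_k c_k^2$, whence
\[
a_w(v,v)-\mu\, b_w(v,v)=\sum_k c_k^2\,(\lambda_{k,h}-\mu).
\]
The desired estimate $a_w(v,v)-\mu b_w(v,v)\ge C_\rho\trb{v}^2$ then follows termwise once I exhibit a constant $C_\rho>0$, independent of $h$, with $\lambda_{k,h}-\mu\ge C_\rho\,\lambda_{k,h}$ for every mode $k$ appearing in the complement.

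The heart of the matter is this scalar inequality, and here the spectral gap enters. The continuous eigenvalues adjacent to $\lambda$ are separated from it by a fixed distance $\rho=\rho(\lambda)>0$ determined solely by the operator $L$; by the convergence of the discrete spectrum to the continuous one (a consequence of the min--max characterization together with the approximation properties of $V_h$), for $h$ small enough every eigenvalue $\lambda_{k,h}$ with $k$ in the complement is separated from $\lambda$ by at least $\rho/2$. Since $\mu$ approximates $\lambda$ with $\mu\neq\lambda_{j,h}$, it likewise stays bounded away from, and below, the complement eigenvalues sitting above $\lambda$; combining the uniform gap with the uniform bound $\mu\le(1-C_\rho)\lambda_{k,h}$ produces the required $C_\rho$, depending only on $\lambda$ (through $\rho$) and not on $h$.

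I expect the main obstacle to be the \emph{uniformity in $h$}: one must guarantee that $C_\rho$ does not deteriorate as $h\to0$, i.e.\ that the discrete spectral gap around $\lambda_i$ is bounded below independently of the mesh. This is precisely where the smallness of $h$ is used, and it rests on ensuring that no spurious discrete eigenvalue drifts into the gap, which is controlled by the convergence quantities $\delta_h,\eta_h,\gamma_h$ of \eqref{def:delta}--\eqref{def:gamma} together with the standard WG approximation estimates. The accompanying sign bookkeeping, ensuring each term $\lambda_{k,h}-\mu$ remains positive on $M_h^\perp$, is the delicate companion point and is settled by the same gap-above-$\mu$ argument.
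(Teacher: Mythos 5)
The paper itself does not prove this lemma --- it simply cites Lemma~1 of \cite{Hu2011} --- so your argument has to stand on its own. The simultaneous $a_w$/$b_w$ diagonalization and the resulting expansion $a_w(v,v)-\mu b_w(v,v)=\sum_k c_k^2(\lambda_{k,h}-\mu)$ are a natural starting point (modulo the fact that $b_w$ is only a semi-norm on $V_h$, so there are modes with vanishing $b_w$-norm; these are harmless, since on them the form reduces to $\trb{v}^2$, but they should be acknowledged). The genuine gap is the sign issue you defer to the last sentence: when $\lambda=\lambda_i$ is not the smallest eigenvalue, $M_h^\perp$ contains the discrete eigenfunctions $u_{k,h}$ whose eigenvalues converge to exact eigenvalues strictly \emph{below} $\lambda$, while $\mu$ is an approximation of $\lambda$ and hence sits \emph{above} those eigenvalues for $h$, $H$ small. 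For such modes $\lambda_{k,h}-\mu$ is negative and bounded away from zero, and no ``gap-above-$\mu$'' argument can reverse its sign. Your own identity then gives $a_w(v,v)-\mu b_w(v,v)=\lambda_{1,h}-\mu<0$ for $v=u_{1,h}\in M_h^\perp$ (normalized), so the termwise bound $\lambda_{k,h}-\mu\ge C_\rho\lambda_{k,h}$ is unattainable and the claimed coercivity cannot be derived along this route; indeed the displayed inequality, read literally, fails for $i>1$.

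Your proof is therefore complete only in the special case where $\lambda$ is the first eigenvalue. For general $i$ the usable statement is a resolvent-type bound of the form $\|(A_h-\mu I)v\|\gtrsim \|v\|$ (equivalently, $|\lambda_{k,h}-\mu|\ge\rho$ uniformly over the complement modes), which follows from exactly your spectral-gap reasoning once absolute values are inserted; note that even $|a_w(v,v)-\mu b_w(v,v)|\ge C_\rho\trb{v}^2$ fails for mixed $v$ combining modes above and below $\mu$, so one must either work with the operator norm or split $M_h^\perp$ into the spans of modes above and below $\lambda_i$ and treat the two signs separately. The uniformity in $h$ that you single out as the main obstacle is in fact the easy part: it follows from the convergence of the discrete spectrum and the fixed gap $\rho(\lambda)$, precisely as you describe. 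The hard point is the indefiniteness of the quadratic form, and that is where the proposal, as written, does not close.
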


Secondly, we have the following discrete Poincare inequality holds true on $V_h$, which has been proved in \cite{Li2013}, Lemma 4.2.
\begin{lemma}\label{dPoincare}
For any $v_h\in V_h$, we have
\begin{eqnarray*}
\|v_h\|\lesssim \trb{v_h}.
\end{eqnarray*}
\end{lemma}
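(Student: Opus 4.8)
The plan is to prove the estimate by a duality (Aubin--Nitsche) argument that transfers the continuous Poincar\'e inequality --- which is built into the compact embedding of $V$ into $W$ --- to the discrete space $V_h$, compensating for the non-conformity ($V_h$ is not a subspace of $V$) by the weak Galerkin consistency machinery.

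First I would record the continuous ingredient. Since $V$ is compactly, hence continuously, embedded in $W$, there is a constant with $b(w,w)\lesssim a(w,w)$ for all $w\in V$. Using the operator $A$ introduced above, for a given $v_h\in V_h$ I would set up the auxiliary problem whose solution is $w=Av_0\in V$, where $v_0$ denotes the $W$-representative (interior component) of $v_h$; by definition $a(w,\phi)=b(v_0,\phi)$ for all $\phi\in V$, and testing with $\phi=w$ together with the continuous Poincar\'e inequality yields $\trb{w}=\sqrt{a(w,w)}\lesssim \sqrt{b(v_0,v_0)}\approx\|v_h\|$. Stability of the projection then gives $\trb{Q_h w}\lesssim\trb{w}\lesssim\|v_h\|$.

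Second, the crux is a consistency identity of the form $\|v_h\|^2=b_w(v_h,v_h)=a_w(Q_h w,v_h)+E(w,v_h)$, obtained by testing the weak Galerkin analogue of the auxiliary equation against $v_h$ and using the way the weak derivatives commute (up to projection) with the continuous ones. Granting such an identity with an error term controlled by $E(w,v_h)\lesssim h^{\alpha}\trb{v_h}\,\|v_h\|$, the boundedness of $a_w$ together with the Cauchy--Schwarz inequality gives $a_w(Q_h w,v_h)\le\trb{Q_h w}\,\trb{v_h}\lesssim\|v_h\|\,\trb{v_h}$, so that $\|v_h\|^2\lesssim\|v_h\|\,\trb{v_h}+h^{\alpha}\trb{v_h}\,\|v_h\|$. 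Dividing by $\|v_h\|$, the case $\|v_h\|=0$ being trivial, then yields $\|v_h\|\lesssim\trb{v_h}$.

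Finally, the step I expect to be the main obstacle is exactly the consistency identity and the bound on $E(w,v_h)$: in the abstract framework the bilinear forms $a_w$, $b_w$ and the projection $Q_h$ (whose definition the text itself flags as needing clarification) are not specified concretely, so the commuting and approximation properties that make $E$ higher order cannot be extracted from the axioms alone and must be verified for each concrete discretization through the explicit weak-gradient definitions, trace inequalities, and projection error estimates --- which is precisely what the cited Lemma~4.2 of \cite{Li2013} carries out. Everything else reduces to the continuous Poincar\'e inequality furnished by the compact embedding and routine Cauchy--Schwarz bookkeeping.
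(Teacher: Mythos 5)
The paper does not actually prove this lemma: it is imported wholesale, with the proof deferred to Lemma~4.2 of \cite{Li2013}. So the only question is whether your sketch closes on its own, and it does not. The decisive step --- the identity $\|v_h\|^2=a_w(Q_hw,v_h)+E(w,v_h)$ together with the bound $E(w,v_h)\lesssim h^{\alpha}\trb{v_h}\,\|v_h\|$ --- is the lemma in disguise, and you concede yourself that it cannot be derived from the axioms of Section~2. That concession is correct and is worth making sharper: in the abstract framework $a_w$, $b_w$ and $Q_h$ carry no stated relation to $a$, $b$ and the continuous derivatives, and since $V_h$ is finite dimensional the two norms are trivially equivalent for each fixed $h$; the entire content of the lemma is uniformity of the constant in $h$, which no purely axiomatic argument can deliver. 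What you have written is therefore an honest outline of the standard duality proof, not a proof, and in that respect it sits at the same level as the paper's own citation.

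Two specific points in the outline would also need repair even as a sketch of the concrete argument. First, testing the auxiliary problem $a(w,\phi)=b(v_0,\phi)$ with $\phi=w$ only yields $a(w,w)^{1/2}\lesssim\|v_0\|$, i.e.\ energy-norm control of $w$; but to estimate the consistency term one must take traces of $\nabla w$ on element boundaries and pair them against $Q_bv_0-v_b$, which requires elliptic regularity $w\in H^{1+s}$, $s>1/2$, with $\|w\|_{1+s}\lesssim\|v_0\|$. That is an extra hypothesis on the domain and operator, not a consequence of the compact embedding of $V$ into $W$, and your argument uses it silently. Second, the projection stability $\trb{Q_hw}\lesssim\trb{w}$ is neither an available axiom nor actually needed: the standard route integrates by parts element by element against $w$ itself, uses the single-valuedness of $v_b$ (and $v_b=0$ on $\partial\Omega$) to cancel the conforming part of the boundary sums, and absorbs the remainder $\sum_{T}\langle v_0-v_b,\nabla w\cdot\bn\rangle_{\partial T}$ into the stabilizer via trace inequalities. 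Your overall strategy (duality plus consistency) is the right one, but every quantitative step lives in the concrete discretization, which is exactly why the paper cites the result instead of proving it.
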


Based on those lemmas, we are ready to derive the following main convergence theory of the WG scheme based on the shifted-inverse power techniques, i.e., Algorithm \ref{shift-alg}.
\begin{theorem}\label{Error_Theorem_Shift}
Suppose $\lambda$ is the $i$-th eigenvalue of problem (\ref{problem-eq1}) with multiplicity $N_i$,  $(\tilde\lambda_h, \tilde u_h)$ is the approximate eigenpair obtained by Algorithm \ref{shift-alg} and $\{(\lambda_{j,h}, u_{j,h})\}_{j=1}^{N_i}$ is the approximate eigenpair obtained by Algorithm \ref{wg-alg}.  Assume $\lambda_H$ is not the eigenvalue of Algorithm \ref{wg-alg}. And when $H$ and $h$ are sufficiently small, $\delta_h \leq \frac{1}{4} \sigma_H$ and $dist(u_H,M_h)\gtrsim \eta_H$, then  there exists $\lambda_h^E\in [\lambda_h^{min},\lambda_h^{max}]$ satisfying
\begin{eqnarray*}
|\lambda_h^E-\tilde\lambda_h|\lesssim \sigma_H^{-2}\delta_H^{4}\eta_H^2,
\end{eqnarray*}
where $\lambda_h^{min}=\min\{\lambda_{j,h}\}_{j=1}^{N_i}$ and $\lambda_h^{max}=\max\{\lambda_{j,h}\}_{j=1}^{N_i}$.  Furthermore, suppose $\bar u_h=\tilde u_h/\|\tilde u_h\|$, then there exists an
exact eigenfunction $u$, such that
\begin{eqnarray*}
|\lambda-\tilde\lambda_h|&\lesssim& \sigma_H^{-2}\delta_H^{4}\eta_H^2+\delta_h,
\\
\trb{Q_hu-\bar u_h}&\lesssim& \sigma_H^{-1}\delta_H^2\eta_H+\sigma_H^{-1}\delta_H\gamma_h.
\end{eqnarray*}
\end{theorem}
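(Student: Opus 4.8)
The plan is to pass to the fine grid, represent $\tilde u_h$ through the spectral decomposition of the WG operator on $V_h$, and analyse the amplification produced by the shift $\lambda_H$. Let $\{\phi_k\}$ be a basis of $V_h$ that is orthonormal in $b_w(\cdot,\cdot)$ and orthogonal in $a_w(\cdot,\cdot)$, with $a_w(\phi_k,\phi_k)=\mu_k$; the members lying in $M_h$ are exactly $\{u_{j,h}\}_{j=1}^{N_i}$ with $\mu_j=\lambda_{j,h}$. Writing $f_k=b_w(u_H,\phi_k)$, the Step 2 equation \eqref{wg-step2} gives the closed form $\tilde u_h=\sum_k(\mu_k-\lambda_H)^{-1}f_k\phi_k$, and I split $\tilde u_h=w+r$ with $w=\sum_{j\le N_i}c_ju_{j,h}\in M_h$ and $r\in M_h^\perp$. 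From $\delta_h\le\frac14\sigma_H$ and the triangle inequality one first records the separation $\frac34\sigma_H\le|\lambda_{j,h}-\lambda_H|\le\frac54\delta_H$ for $j\le N_i$, while $|\mu_k-\lambda_H|\gtrsim1$ for the remaining indices (here the hypothesis that $\lambda_H$ is not a fine WG eigenvalue guarantees $\lambda_H\neq\lambda_{j,h}$).

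Two a priori bounds drive everything. First, since $r\in M_h^\perp$ and $a_w(w,v)-\lambda_Hb_w(w,v)=0$ for all $v\in M_h^\perp$ (by the eigen-relations and orthogonality), testing \eqref{wg-step2} with $v=r$ and invoking Lemma \ref{technique-1} with $\mu=\lambda_H$ gives $C_\rho\trb r^2\le a_w(r,r)-\lambda_Hb_w(r,r)=b_w(u_H,r)$; replacing $u_H$ by $u_H-\zeta$ for arbitrary $\zeta\in M_h$ (legitimate as $r\perp_{b_w}M_h$) and applying Lemma \ref{dPoincare} yields $C_\rho\trb r^2\le dist(u_H,M_h)\,\|r\|\lesssim dist(u_H,M_h)\,\trb r$, hence $\trb r\lesssim dist(u_H,M_h)\approx\eta_H$. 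Second, the cluster block is amplified: $\|\tilde u_h\|^2\ge\sum_{j\le N_i}(\lambda_{j,h}-\lambda_H)^{-2}f_j^2\ge(\tfrac54\delta_H)^{-2}\sum_{j\le N_i}f_j^2\gtrsim\delta_H^{-2}$, since for $H,h$ small the $b_w$-mass of the normalized $u_H$ concentrates on $M_h$, so $\sum_{j\le N_i}f_j^2\gtrsim1$. Thus $\|\tilde u_h\|\gtrsim\delta_H^{-1}$.

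For the eigenfunction estimate I compare $\tilde u_h$ with the \emph{ideal} shifted-inverse target $z=(\lambda-\lambda_H)^{-1}Q_hu$, where $u\in M$ is the exact eigenfunction whose fine projection best matches the $M_h$-component of $u_H$ (to within $\gamma_h$). Using the relation $a_w(Q_hu,\cdot)-\lambda b_w(Q_hu,\cdot)$ together with the eigen-relations for $u_{j,h}$, the residual $a_w(\tilde u_h-z,v)-\lambda_Hb_w(\tilde u_h-z,v)$ reduces to $b_w(u_H-Q_hu,v)$ plus the defect measuring how far $Q_hu$ is from solving the discrete eigenproblem. Testing against the $M_h^\perp$-part of $\tilde u_h-z$ and using Lemma \ref{technique-1} controls that part by $\eta_H+\gamma_h$; testing against each $u_{j,h}$ instead divides by $(\lambda_{j,h}-\lambda_H)$, so the $M_h$-part is amplified and bounded by $\sigma_H^{-1}\gamma_h$ — this is precisely where the factor $\sigma_H^{-1}$ enters. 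Collecting these, $\trb{\tilde u_h-z}\lesssim\sigma_H^{-1}(\delta_H\eta_H+\gamma_h)$. Since $\|z\|\approx|\lambda-\lambda_H|^{-1}\gtrsim\delta_H^{-1}$ and $\|\tilde u_h\|\gtrsim\delta_H^{-1}$, normalizing both vectors divides this bound by a quantity $\gtrsim\delta_H^{-1}$, i.e. multiplies it by $\lesssim\delta_H$, and $Q_hu/\|Q_hu\|$ differs from $Q_hu$ only at higher order; this produces $\trb{Q_hu-\bar u_h}\lesssim\sigma_H^{-1}\delta_H^2\eta_H+\sigma_H^{-1}\delta_H\gamma_h$.

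Finally, for the eigenvalue I exploit the algebraic structure of the Rayleigh quotient \eqref{wg-step3}. Choosing $\lambda_h^E=\big(\sum_{j}c_j^2\big)^{-1}\sum_{j}\lambda_{j,h}c_j^2$, a convex combination of $\{\lambda_{j,h}\}$ and therefore in $[\lambda_h^{min},\lambda_h^{max}]$, a direct computation collapses the difference to the $M_h^\perp$ contribution:
\[
\tilde\lambda_h-\lambda_h^E=\frac{a_w(r,r)-\lambda_h^Eb_w(r,r)}{\|\tilde u_h\|^2}.
\]
Bounding the numerator by $\lesssim\trb r^2\lesssim\eta_H^2$ (using $\|r\|\lesssim\trb r$ and $\lambda_h^E\approx\lambda$) and using $\|\tilde u_h\|^2\gtrsim\delta_H^{-2}$ gives $|\lambda_h^E-\tilde\lambda_h|\lesssim\delta_H^2\eta_H^2\le\sigma_H^{-2}\delta_H^4\eta_H^2$; the stated eigenvalue bound follows from $|\lambda-\tilde\lambda_h|\le|\lambda-\lambda_h^E|+|\lambda_h^E-\tilde\lambda_h|$ and $|\lambda-\lambda_h^E|\le\delta_h$. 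I expect the main obstacle to be the bookkeeping in the eigenfunction step: isolating the $\sigma_H^{-1}$ amplification on the $M_h$-block while keeping the multiplicity-$N_i$ distortion (the varying factors $(\lambda_{j,h}-\lambda_H)^{-1}$) under control, and converting the cross-grid quantity $dist(u_H,M_h)$ into $\eta_H$ by combining the hypothesis $dist(u_H,M_h)\gtrsim\eta_H$ with the matching approximation estimate.
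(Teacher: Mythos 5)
Your argument is correct and, apart from one step, is the paper's own proof written with explicit spectral coefficients instead of operator notation: the paper introduces an iterate $\hat u_h$ proportional to $\tilde u_h$ (so that it solves \eqref{est1}), lets $E_h$ be the $b_w$-orthogonal projection onto $M_h$ (your $w$ and $r$ are $E_h\tilde u_h$ and $\tilde u_h-E_h\tilde u_h$), derives $\trb{\hat u_h-E_h\hat u_h}\lesssim\delta_H\eta_H$ from Lemma \ref{technique-1} exactly as you do for $r$, proves the lower bound $\|\hat u_h\|\geq C$ (equivalently your $\|\tilde u_h\|\gtrsim\delta_H^{-1}$) by the same cluster-amplification computation under $\delta_h\leq\frac14\sigma_H$, and takes $\lambda_h^E$ to be the Rayleigh quotient of the $M_h$-component, so that your identity $\tilde\lambda_h-\lambda_h^E=\bigl(a_w(r,r)-\lambda_h^Eb_w(r,r)\bigr)/\|\tilde u_h\|^2$ is precisely what is exploited there. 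The only genuine divergence is the eigenfunction bound. The paper never forms your ideal target $z=(\lambda-\lambda_H)^{-1}Q_hu$: it chooses $u=\sum_j\alpha_ju_j$ with the same coefficients as $E_h\hat u_h=\sum_j\alpha_ju_{j,h}$, bounds $\trb{E_h\hat u_h-Q_hu}\lesssim\gamma_h$ in one line from $dist(M_h,M)\lesssim\gamma_h$, adds $\trb{\hat u_h-E_h\hat u_h}\lesssim\delta_H\eta_H$, and normalizes; this is shorter and gives the sharper bound $\delta_H\eta_H+\gamma_h$, which implies the stated one since $\sigma_H^{-1}\delta_H\geq1$. Your residual-equation route reproduces the stated bound but hinges on controlling the consistency defect $a_w(Q_hu,v)-\lambda b_w(Q_hu,v)$ (how far $Q_hu$ is from satisfying the discrete eigen-relation), which you leave implicit; to close it you would expand $Q_hu=\sum_j\beta_jQ_hu_j$ and invoke $\trb{Q_hu_j-u_{j,h}}\lesssim\gamma_h$ together with $|\lambda-\lambda_{j,h}|\leq\delta_h$ --- which is, in effect, the one-line comparison the paper makes directly.
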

\begin{proof}
From \eqref{wg-step2}, we have
\begin{eqnarray*}
  a_w(\tilde u_h,v_h)-\lambda_{m,H} b_w(\tilde u_h,v_h) = b_w(u_{m,H},v_h),\quad\forall v_h\in V_h,
\end{eqnarray*}
where $(\lambda_{m,H}, u_{m,H})$ is the $m$-th numerical eigenpair corresponding to $\lambda$ on the coarse grid.  Define $\tdu\in V_h$ such that
\begin{eqnarray}\label{est1}
a_w(\tdu,v_h)-\lambda_{m,H} b_w(\tdu,v_h) = (\lambda_{m,h}-\lambda_{m,H})b_w(u_{m,H},v_h),\quad\forall v_h\in V_h.
\end{eqnarray}
Then $\tdu=1/(\lambda_{m,h}-\lambda_{m,H})\tilde u_h$ and they have the same Rayleigh quotient, i.e.
\begin{eqnarray*}
\tilde\lambda_h=\frac{a_w(\tdu,\tdu)}{b_w(\tdu,\tdu)}.
\end{eqnarray*}

Let $E_h\tdu$ be the orthogonal projection of $\tdu$ on $M_h$ with respect to $b_w(\cdot, \cdot)$, then $\tdu-E_h\tdu\in M_h^\perp$, $E_h\tdu\in M_h$, and
\begin{eqnarray}\label{est7}
  \|\tdu\|^2=\|E_h\tdu\|^2+\|\tdu-E_h\tdu\|^2.
\end{eqnarray}
Denote $E_h\tdu=\sum_{j=1}^{N_i}\alpha_j u_{j,h}$ and then we have
\begin{equation*}
a_w(\tdu-E_h\tdu,E_h\tdu) = \sum_{j=1}^{N_i}\alpha_j a_w(\tdu-E_h\tdu,u_{j,h}) =\sum_{j=1}^{N_i}\lambda_{j,h}\alpha_j b_w(\tdu-E_h\tdu,u_{j,h})=0
\end{equation*}
which implies $E_h \tdu$ also the orthogonal projection with respect to $a_w(\cdot, \cdot)$ and
\begin{eqnarray}\label{est8}
  \trb\tdu^2=\trb{E_h\tdu}^2+\trb{\tdu-E_h\tdu}^2.
\end{eqnarray}

Define operator $A_h$ by $b_w(A_h v_h,w_h)=a_w(v_h,w_h)$ for any $w_h\in V_h$, then (\ref{est1}) can be rewritten in the following operator form,
\begin{eqnarray}\label{est5}
(A_h-\lambda_{m,H} I)\hat u_h=(\lambda_{m,h}-\lambda_{m,H})u_{m,H}.
\end{eqnarray}
Assume $\lambda_{m,H}$ is not an eigenvalue of $A_h$, then $(A_h-\lambda_{m,H} I)$ is an
isomorphism from $M_h$ to $M_h$ and from $M_h^\perp$ to $M_h^\perp$. Then we have
\begin{align*}
E_h(A_h-\lambda_{m,H} I)\tdu &= E_h(A_h-\lambda_{m,H} I)E_h\tdu+E_h(A_h-\lambda_{m,H} I)(\tdu-E_h\tdu) \\
&= (A_h-\lambda_{m,H} I)E_h\tdu
\end{align*}
and it follows that
\begin{eqnarray}\label{est6}
(A_h-\lambda_{m,H} I)E_h\hat u_h=(\lambda_{m,h}-\lambda_{m,H})E_h u_{m,H}.
\end{eqnarray}
From \eqref{est5} and \eqref{est6},  we can conclude that
\begin{eqnarray}\label{est11}
  (A_h-\lambda_{m,H} I)(\hat u_h-E_h\hat u_h)=(\lambda_{m,h}-\lambda_{m,H})(u_{m,H}-E_h u_{m,H}).
\end{eqnarray}

On the other hand, based on the definition \eqref{def:eta}, we have
\begin{equation} \label{est12}
\|u_{m,H}-E_h u_{m,H}\|= dist(u_{m,H},M_h) \le dist(u_{m,H},M)+dist(M,M_h) \lesssim \eta_H+\eta_h.
\end{equation}
Note that $\hat v=\tdu-E_h\tdu\in M_h^\perp\subset V_h$. From Lemma \ref{technique-1}, \eqref{est11}, and \eqref{est12} we have
\begin{eqnarray*}
\trb{\hat v}^2&\lesssim&a_w(\hat v,\hat v)-\lambda_H b_w(\hat v,\hat v) = b_w((A_h-\lambda_{m,H} I)(\tdu-E_h\tdu),\hat v)
\\
&=&(\lambda_{m,h}-\lambda_{m,H})b_w(u_{m,H}-E_h u_{m,H},\hat v) \lesssim (\delta_h+\delta_H)(\eta_h+\eta_H)\trb{\hat v},
\end{eqnarray*}
which implies
\begin{eqnarray}\label{est3}
\trb{\tdu-E_h\tdu}\lesssim \delta_H\eta_H.
\end{eqnarray}
From (\ref{est3}) and Lemma \ref{dPoincare}, we obtain
\begin{eqnarray*}
  \|\tdu-E_h\tdu\|\lesssim \delta_H\eta_H.
\end{eqnarray*}
And by the boundedness of $A_h$, we have
\begin{eqnarray}\label{est9}
  && \|(A_h-\lambda_{m,H})(\tdu-E_h\tdu)\|
  \lesssim \delta_H\eta_H.
\end{eqnarray}

Since $E_h\tdu=\sum_{j=1}^{N_i}\alpha_ju_{j,h}$, we have
\begin{eqnarray*} 
  \|(A_h-\lambda_{m,H})E_h\tdu\|
  &=&\|\sum_{j=1}^{N_i}\alpha_j (A_h-\lambda_{m,H})u_{j,h}\|
  =\|\sum_{j=1}^{N_i}\alpha_j (\lambda_{j,h}-\lambda_{m,H})u_{j,h}\|
  \\ \nonumber
  &\le& \max|\lambda_{j,h}-\lambda_{m,H}|\|E_h\tdu\|
  \lesssim \delta_H\|\tdu\|.
\end{eqnarray*}
Moreover, becasue $(A_h-\lambda_{m,H} I)$ is an
isomorphism from $M_h$ to $M_h$ and from $M_h^\perp$ to $M_h^\perp$,
$(A_h-\lambda_{m,H} I)^{-1}$ is also an
isomorphism from $M_h$ to $M_h$ and from $M_h^\perp$ to $M_h^\perp$.
From (\ref{est5}) we have
\begin{eqnarray*}
  \|\tdu\|^2 &=& \|(\lambda_{m,h}-\lambda_{m,H})(A_h - \lambda_{m,H})^{-1}u_{m,H}\|^2
  \\
  &=& \|(\lambda_{m,h}-\lambda_{m,H})(A_h - \lambda_{m,H})^{-1}E_h u_{m,H}\|^2
  \\
  &&+\|(\lambda_{m,h}-\lambda_{m,H})(A_h - \lambda_{m,H})^{-1}(u_{m,H}-E_h u_{m,H})\|^2
  \\
  &\ge& \|(\lambda_{m,h}-\lambda_{m,H})(A_h - \lambda_{m,H})^{-1}E_h u_{m,H}\|^2
  \\
  &=& \|(\lambda_{m,h}-\lambda_{m,H})(A_h - \lambda_{m,H})^{-1}\left(\sum_{j=1}^{N_i} (u_{j,h},u_{m,H})u_{j,h}\right)\|^2
  \\
  &=& \|\sum_{j=1}^{N_i} \dfrac{\lambda_{m,h}-\lambda_{m,H}}{\lambda_{j,h}-\lambda_{m,H}} (u_{j,h},u_{m,H})u_{j,h}\|^2
  \\
  &\ge& \left(1-\max_{1\le j\le N_i} \left|\dfrac{\lambda_{m,h}-\lambda_{j,h}}{\lambda_{j,h}-\lambda_{m,H}}\right|\right)^2 (\|u_{m,H}\|^2-\|u_{m,H}-E_h u_{m,H}\|^2).
\end{eqnarray*}
By the definitions \eqref{def:delta} and \eqref{def:sigma}, the assumption that, when $h$ and $H$ are sufficiently small, $\delta_h  \leq \frac{1}{4} \sigma_H$ , we have
\begin{eqnarray*}
&&\max_{1\le j\le N_i} \left|\frac{\lambda_{m,h}-\lambda_{j,h}}{\lambda_{j,h}-\lambda_{m,H}}\right|
\lesssim  \dfrac{2 \displaystyle\max_{1\le j\le N_i}|\lambda-\lambda_{j,h}|}{\displaystyle|\lambda-\lambda_{m,H}|-\max_{1\le j\le N_i}|\lambda-\lambda_{j,h}|}
 \lesssim  \frac{2\delta_h}{|\sigma_H-\delta_h|} \leq \frac{2}{3}.
\end{eqnarray*}
Thus, we obtain
\begin{eqnarray}\label{est4}
  \|\tdu\|\ge C.
\end{eqnarray}

{\color{black}
Next, we estimate the eigenfunctions. Since $dist(M_h,M)\lesssim \gamma_h$ \eqref{def:gamma} with respect to
$\trb\cdot$ norm, there exists $u_j\in M$ such that
\begin{eqnarray*}
\trb{E_h\tdu-Q_h u} &=& \trb{\sum_{j=1}^{N_i}\alpha_j(Q_hu_j-u_{j,h})}\lesssim \gamma_h.
\end{eqnarray*}
From (\ref{est3}) we can derive that
\begin{eqnarray*}
  \trb{\tdu-Q_h u}\le \trb{\tdu-E_h\tdu}+\trb{E_h\tdu-Q_h u}\lesssim \delta_H\eta_H+\gamma_h.
\end{eqnarray*}
By the definitions of $\bar u_h$ and $\tdu$, together with the lower bound \eqref{est4}, we define $\bar u=u/\|\tdu\|$ and obtain
\begin{eqnarray*}
  \trb{\bar u_h-Q_h\bar u}\lesssim \trb{\tdu-Q_h u}\lesssim \delta_H\eta_H+\gamma_h.
\end{eqnarray*}
}

Now we turn to the estimate for the eigenvalue. Define
\begin{eqnarray*}
  \lambda_h^E=\dfrac{a_w(E_h\tdu,E_h\tdu)}{b_w(E_h\tdu,E_h\tdu)}.
\end{eqnarray*}
From \eqref{est7}), \eqref{est8}, \eqref{est3}, and \eqref{est4}, we have
\begin{eqnarray*}
|\tilde \lambda_h-\lambda_h^E|
&=&\left|\frac{a_w(\tdu,\tdu)}{b_w(\tdu,\tdu)}-\lambda_h^E\right|
\lesssim \left|a_w(\tdu,\tdu)-\lambda_h^Eb_w(\tdu,\tdu)\right|\|\tdu\|^{-2}
\\
&\lesssim& \left|a_w(\tdu-E_h\tdu,\tdu-E_h\tdu)-\lambda_h^Eb_w(\tdu-E_h\tdu,\tdu-E_h\tdu)\right|
\\
&&+\left|a_w(E_h\tdu,E_h\tdu)-\lambda_h^Eb_w(E_h\tdu,E_h\tdu)\right| \lesssim \delta_H^{2}\eta_H^2.
\end{eqnarray*}
Next we show that $\lambda_h^{min}\le\lambda_h^E\le\lambda_h^{max}$.
Recall that $E_h\tdu=\sum_{j=1}^{N_i}\alpha_j u_{j,h}$, and $\{u_{j,h}\}_{j=1}^{N_i}$ is
an orthonormal basis of $M_h$, we have
\begin{eqnarray*}
  \alpha_j=b_w(E_h\tdu,u_{j,h}).
\end{eqnarray*}
It follows that
\begin{eqnarray*}
  \lambda_h^E 
 =\dfrac{\sum_{j=1}^{N_i}\alpha_ja_w(E_h\tdu,u_{j,h})}{\sum_{j=1}^{N_i}\alpha_jb_w(E_h\tdu,u_{j,h})}
  =\dfrac{\sum_{j=1}^{N_i}\lambda_{j,h}\alpha_jb_w(E_h\tdu,u_{j,h})}{\sum_{j=1}^{N_i}\alpha_jb_w(E_h\tdu,u_{j,h})}
  =\dfrac{\sum_{j=1}^{N_i}\alpha_j^2\lambda_{j,h}}{\sum_{j=1}^{N_i}\alpha_j^2},
\end{eqnarray*}
which implies $\lambda_h^{min}\le\lambda_h^E\le\lambda_h^{max}$.
Thus, the proof is completed.
\end{proof}

Based on Theorem~\ref{Error_Theorem_Shift}, we have the following corollary shows that $\tilde{\lambda}_h$ is a lower bound of $\lambda$.

\begin{corollary}\label{Shift-lowerbound}
Under the same assumptions of Theorem \ref{Error_Theorem_Shift}, we have
\begin{eqnarray*}
|\tilde\lambda_h-\lambda_h^E|\lesssim \delta_H^{2}\eta_H^2.
\end{eqnarray*}
If $\lambda_h^E$ is a lower bound of $\lambda$, i.e.,
 $\lambda-\lambda_h^E\ge \sigma_h$,
then $\tilde \lambda_h$ is still a lower bound of $\lambda$ provided $\delta_H^{2}\eta_H^2\ll \sigma_h$. \textcolor{red}{(This assumption does not make sense.  Because usually the shifted-inverse power method achieve the optimal asymptotical error when $\delta_H^2 \eta_H^2 \approx \delta_h$.  And for WG scheme, $\delta_h \geq \sigma_h$ asymptotically!)}
\end{corollary}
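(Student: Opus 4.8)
The plan is to obtain both claims directly from what has already been established inside the proof of Theorem~\ref{Error_Theorem_Shift}, so the corollary requires essentially no new machinery. The first estimate $|\tilde\lambda_h-\lambda_h^E|\lesssim\delta_H^2\eta_H^2$ is precisely the bound derived near the end of that proof, where $\tilde\lambda_h$ and $\lambda_h^E$ are compared through the $b_w$-orthogonal splitting $\tilde u_h=E_h\tilde u_h+(\tilde u_h-E_h\tilde u_h)$ together with \eqref{est3} and the lower bound \eqref{est4}. Hence I would simply restate it, but make the hidden constant explicit: there is a constant $C>0$, independent of $h$ and $H$, such that $|\tilde\lambda_h-\lambda_h^E|\le C\,\delta_H^2\eta_H^2$. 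Keeping $C$ visible is what makes the second part rigorous.

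For the second claim I would argue by a one-line decomposition plus the triangle inequality. Writing
\[
\lambda-\tilde\lambda_h=(\lambda-\lambda_h^E)+(\lambda_h^E-\tilde\lambda_h),
\]
the hypothesis that $\lambda_h^E$ is a lower bound gives $\lambda-\lambda_h^E\ge\sigma_h$, while the first part controls the perturbation by $|\lambda_h^E-\tilde\lambda_h|\le C\,\delta_H^2\eta_H^2$. Combining these,
\[
\lambda-\tilde\lambda_h\ge\sigma_h-C\,\delta_H^2\eta_H^2.
\]
Under the stated smallness condition $\delta_H^2\eta_H^2\ll\sigma_h$ — interpreted quantitatively as $C\,\delta_H^2\eta_H^2<\sigma_h$ — the right-hand side is strictly positive, so $\tilde\lambda_h<\lambda$ and therefore $\tilde\lambda_h$ remains a lower bound of $\lambda$.

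Since the argument is purely a perturbation/stability estimate built on the theorem, there is no genuine analytic obstacle; the computational heavy lifting was already done in proving Theorem~\ref{Error_Theorem_Shift}. The only point that needs care is the precise meaning of the symbol $\ll$: the conclusion requires that the implicit constant from the first estimate be absorbed, i.e.\ that $\delta_H^2\eta_H^2$ be small relative to $\sigma_h$ with a margin dictated by $C$, rather than merely comparable. I would therefore state the hypothesis in the sharper form $C\,\delta_H^2\eta_H^2<\sigma_h$ so that the implication is unambiguous. I note for completeness that whether this smallness condition is compatible with the asymptotic scaling $\delta_H^2\eta_H^2\approx\delta_h$ and $\delta_h\ge\sigma_h$ that is typical for the WG eigenvalue approximation is a separate modeling question about the hypotheses; the stated implication itself follows immediately from the two displayed inequalities.
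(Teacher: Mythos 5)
Your proposal is correct and coincides with the paper's (implicit) argument: the paper gives no separate proof of Corollary~\ref{Shift-lowerbound}, treating the first bound as the estimate already derived at the end of the proof of Theorem~\ref{Error_Theorem_Shift} and the second claim as the immediate triangle-inequality consequence $\lambda-\tilde\lambda_h\ge(\lambda-\lambda_h^E)-|\lambda_h^E-\tilde\lambda_h|\ge\sigma_h-C\delta_H^2\eta_H^2>0$. Your added care about making the hidden constant $C$ explicit in interpreting ``$\ll$'' is a reasonable sharpening, and your closing caveat correctly mirrors the authors' own marginal note that the smallness hypothesis may be incompatible with the asymptotic scaling of the WG scheme.
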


\section{Examples}
In this section, we use the Laplacian and biharmonic eigenvalue problems as examples to illustrate the shifted-inverse power weak Galerkin method, i.e., Algorithm~\ref{shift-alg}.

\subsection{Laplacian eigenvalue problem}
Consider the Laplacian eigenvalue problem
\begin{eqnarray}\label{Laplacian-eq1}
-\Delta u &=& \lambda u,\quad\text{ in }\Omega,
\\ \label{Laplacian-eq2}
u&=& 0,\quad\text{ on }\partial\Omega,
\\ \label{Laplacian-eq3}
\int_\Omega u^2 &=& 1,
\end{eqnarray}
where $\Omega$ is a polygon or polyhedral domain in $\Real^d(d=2,3)$.

Let $\T_h$ be a polygonal partition of the domain $\Omega$ satisfying the assumptions in \cite{Wang2014a} and $\E_h$ denote all the edges (faces in 3D) in $\T_h$. We use $P_k(T)$ to represent the piecewise polynomials of degree $k$ on each element $T\in\T_h$ and use $P_k(e)$ to represent the piecewise
polynomials of degree $k$ on each edge $e\in\E_h$.  For each element $T$, $h_T$ stands for the diameter of $T$ and $h=\max_{T\in \T_h} h_T$ is the mesh size.

We introduce the following weak Galerkin finite element space
\begin{eqnarray*}
V_h=\{(v_0,v_b):v_0\in P_k(T), v_b\in P_{k-1}(e),
\text{ and } v_b=0 \text{ on }\partial\Omega\},
\end{eqnarray*}
where $k\ge 1$ is an integer. We emphasis that $v_b$ is single-valued on each $e\in \E_h$ and $v_b$ is irrelevant to the trace of $v_0$.  Now, we define some projections onto $V_h$.  Denote $Q_0$ the $L^2$ projection onto $P_k(T)$ on each element $T$, $Q_b$ the $L^2$ projection onto $P_{k-1}(e)$ on each element $e$, and $Q_h=\{Q_0,Q_b\}$ is a projection operator onto $V_h$.  Moreover,  on the WG space $V_h$, we can define the following weak gradient operator $\nabla_w$ by distribution.
\begin{definition}
For any $v\in V_h$, $\nabla_w v\in
[P_{k-1}(T)]^d$ is the unique polynomial
satisfying on each element $T\in \T_h$,
\begin{eqnarray*}
(\nabla_w v,\bq)_T = -(v_0,\nabla\cdot\bq)_T + \langle
v_b,\bq\cdot\bn\rangle_{\partial T},
\quad\forall\bq\in [P_{k-1}(T)]^d,
\end{eqnarray*}
where $\bn$ is the unit outward normal vector.
\end{definition}

Based on the above definitions and notations, the bilinear forms $a_w(\cdot, \cdot)$ and $b_w(\cdot, \cdot)$ for Laplacian eigenvalue problem are defined as follows, for any $v,w\in V_h$,
\begin{align*}
a_w(v,w)&=(\nabla_w v,\nabla_w w)+s(v,w), \\
b_w(v,w)=&(v_0,w_0),
\end{align*}
where
\begin{equation*}
s(v,w)=\sumT h_T^{-1+\eps}\langle Q_b v_0-v_b,
Q_b w_0-w_b\rangle_{\partial T}
\end{equation*}
and $0<\eps<1$ is a positive constant to be chosen.
Furthermore, we define a semi-norm
\begin{eqnarray*}
\trb{v}^2=a_w(v,v),
\end{eqnarray*}
which indeed defines a norm on $V_h$ as shown in \cite{Xie2015}.


For the WG scheme for Laplacian eigenvalue problems, i.e., Algorithm~\ref{wg-scheme}, the following convergence results has been derived in Theorem 4.7 and 5.3, \cite{Xie2015}.
\begin{lemma}\label{Error_Theorem_Eigenpair}
Let $\lambda_{i,h}$ be the $i$-th approximate eigenvalue obtained by Algorithm \ref{wg-alg} and $u_{i,h}$ be the corresponding eigenvector.  There exists an exact eigenvalue $\lambda_i$ and the corresponding exact eigenfunction $u_i$ such that, if $u_i\in H^{k+1}(\Omega)\cap H_0^1(\Omega)$, the following error estimates hold
\begin{eqnarray}  \nonumber 
&&h^{2k}\lesssim \lambda_i-\lambda_{i,h}\lesssim Ch^{2k-2\eps},
\\\label{eig-est2}
&&\trb{Q_hu_i-u_{i,h}} \lesssim h^{k-\eps},
\\\label{eig-est3}
&&\|u_i-u_{i,h}\|\lesssim h^{k+1-\eps}.
\end{eqnarray}
\end{lemma}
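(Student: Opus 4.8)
The plan is to proceed via the Babu\v{s}ka--Osborn spectral approximation framework, reducing the eigenvalue and eigenfunction estimates to a priori error bounds for the associated \emph{source problem} and then treating the lower bound separately by a Rayleigh-quotient/min--max argument. Introduce the continuous solution operator $\mathcal{S}:L^2(\Omega)\to H_0^1(\Omega)$ by $a(\mathcal{S}f,v)=(f,v)$ and its WG analogue $\mathcal{S}_h:L^2(\Omega)\to V_h$ by $a_w(\mathcal{S}_hf,v)=(f,v_0)$ for all $v\in V_h$; both are compact and self-adjoint with respect to the relevant inner products, and their nonzero eigenvalues are the reciprocals $\lambda_i^{-1}$, $\lambda_{i,h}^{-1}$ with the same eigenfunctions as \eqref{Laplacian-eq1} and Algorithm~\ref{wg-alg}. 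Standard spectral theory then bounds the eigenfunction error by the source-problem energy error and the eigenvalue error by its square, provided $h$ is small enough that the $i$-th eigenvalue cluster is separated from the rest of the spectrum.

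For the source problem I would first record the commutativity identity $\nabla_w Q_h w=\mathcal{R}_h\nabla w$, where $\mathcal{R}_h$ is the element-wise $L^2$ projection onto $[P_{k-1}(T)]^d$; this follows directly from the definition of $\nabla_w$ together with the defining properties of $Q_0,Q_b$ after integration by parts. Combining this with the approximation properties of $Q_h$ and a trace-scaling bound on the stabilizer $s(\cdot,\cdot)$, which carries the factor $h_T^{-1+\eps}$ and hence loses the power $h^{\eps}$, one obtains the energy estimate $\trb{Q_h w-\mathcal{S}_hf}\lesssim h^{k-\eps}\|f\|$ for $w=\mathcal{S}f\in H^{k+1}(\Omega)$, and then the $L^2$ estimate $\|Q_0w-(\mathcal{S}_hf)_0\|\lesssim h^{k+1-\eps}\|f\|$ by an Aubin--Nitsche duality argument exploiting full elliptic regularity. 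Feeding these into the spectral framework gives \eqref{eig-est2}, \eqref{eig-est3}, and the \emph{upper} bound $\lambda_i-\lambda_{i,h}\lesssim\trb{u_i-u_{i,h}}^2\lesssim h^{2k-2\eps}$.

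The remaining, and genuinely delicate, part is the \emph{lower} bound $h^{2k}\lesssim\lambda_i-\lambda_{i,h}$, which is where the nonconforming nature of the weak gradient is exploited. Here I would evaluate the discrete Rayleigh quotient at the trial function $Q_hu_i$. Using the commutativity identity, $\|\nabla_wQ_hu_i\|^2=\|\mathcal{R}_h\nabla u_i\|^2=\|\nabla u_i\|^2-\|\nabla u_i-\mathcal{R}_h\nabla u_i\|^2$, so the weak gradient \emph{underestimates} the true Dirichlet energy by a deficit of exact order $h^{2k}$ (since $\nabla u_i\in H^{k}$). A trace-scaling estimate shows the stabilizer contributes only $s(Q_hu_i,Q_hu_i)\lesssim h^{2k+\eps}$, a factor $h^{\eps}$ smaller, while $b_w(Q_hu_i,Q_hu_i)=\|Q_0u_i\|^2=1-O(h^{2k+2})$. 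Hence $a_w(Q_hu_i,Q_hu_i)/b_w(Q_hu_i,Q_hu_i)\le\lambda_i-c\,h^{2k}$ for $h$ small, and applying the discrete min--max (Courant--Fischer) principle to the $N_i$-dimensional trial space $\mathrm{span}\{Q_hu_1,\dots,Q_hu_{N_i}\}$ forces $\lambda_{i,h}\le\lambda_i-c\,h^{2k}$.

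The main obstacle is precisely this last step: establishing $\nabla_wQ_h=\mathcal{R}_h\nabla$ rigorously, and---when $N_i>1$---ensuring that the min--max argument over $\mathrm{span}\{Q_hu_j\}$ actually pins down the $i$-th discrete eigenvalue rather than a neighbouring one. The latter requires showing that the $Q_hu_j$ are asymptotically linearly independent and that this trial space is $\trb{\cdot}$-close to the discrete eigenspace $M_h$, i.e.\ a spectral-gap/alignment argument tying the index $i$ on the two grids together. The $\eps$-discrepancy between the $h^{2k}$ lower and $h^{2k-2\eps}$ upper bounds is an unavoidable consequence of the weakened stabilizer and reduces to bookkeeping once these two points are in place.
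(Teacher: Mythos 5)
The paper does not actually prove this lemma: it is imported verbatim from Theorems 4.7 and 5.3 of \cite{Xie2015}, so the only in-house comparison available is the appendix, where the authors run the completely analogous argument for the biharmonic case. Measured against that, your first two paragraphs follow the same route: solution operators $\mathcal{S}$, $\mathcal{S}_h$, the Babu\v{s}ka--Osborn machinery for selfadjoint compact operators, the commutativity identity $\nabla_w Q_h=\mathcal{R}_h\nabla$ (the Laplacian analogue of Lemma \ref{commu-prop}), and duality for the $L^2$ rate; this matches Theorem \ref{Error_Theorem_Biharmonic} and yields \eqref{eig-est2}, \eqref{eig-est3} and the upper bound $h^{2k-2\eps}$. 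Where you genuinely diverge is the lower bound. You propose a Courant--Fischer argument over the trial space $\mathrm{span}\{Q_hu_1,\dots,Q_hu_i\}$, whereas the paper (Lemma \ref{expansion} and Theorem \ref{biharmonic-lowerbound}, mirroring \cite{Xie2015}) derives an exact expansion of $\lambda-\lambda_h$ in which $\|\nabla u-\mathcal{R}_h\nabla u\|^2$ and $\trb{Q_hu-u_h}^2$ appear with a positive sign and all remaining terms are shown to be of higher order. The identity route buys you exactly the two things you flag as obstacles: it never needs the $Q_hu_j$ to span an $i$-dimensional space aligned with the discrete eigenspace, and it avoids the cross-term bookkeeping in the min--max quotient (your cross terms $a_w(Q_hu_j,Q_hu_l)$ for $j\neq l$ are themselves $O(h^{2k})$, the same order as the deficit you are trying to extract, so your argument needs a uniform positive-definiteness statement for the Gram matrix of the projection errors, not just the diagonal bound). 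Both routes ultimately rest on the same nondegeneracy input, namely the saturation bound $\|\nabla u-\mathcal{R}_h\nabla u\|\gtrsim h^{k}$ (the analogue of Lemma \ref{convergence-rate}), which you use implicitly but should state as a hypothesis, since it can fail for special $u$. With that caveat your outline is sound; it is simply a different, and somewhat more delicate, mechanism for the lower bound than the one the paper relies on.
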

By Lemma \ref{Error_Theorem_Eigenpair}, we have
\begin{equation*}
\delta_h\lesssim h^{2k-2\eps}, \sigma_h \gtrsim h^{2k}, \eta_h\lesssim h^{k+1-\eps}, \ \text{and} \ \gamma_h\lesssim h^{k-\eps}.
\end{equation*}

Suppose ($\tilde\lambda_h$, $\tilde u_h$) is the approximate eigenpair obtained by shifted-inverse
power weak Galerkin algorithm \ref{shift-alg}. Let $\bar u_h=\tilde u_h/\|\tilde u_h\|$.  According to Theorem \ref{Error_Theorem_Shift},  when $h^{2k-2\eps} \leq \frac{1}{4} H^{2k}$, we have
\begin{eqnarray*}
|\lambda-\tilde\lambda_h|&\lesssim& \delta_H^{2}\eta_H^2+\delta_h\lesssim H^{6k+2-6\eps}+h^{2k-2\eps},
\\
\trb{Q_hu-\bar u_h}&\lesssim& \delta_H\eta_H+\gamma_h\lesssim H^{3k+1-3\eps}+h^{k-\eps}.
\end{eqnarray*}
Moreover, since WG approximation $\lambda_h$ is a lower bound of $\lambda$, by Corollary \ref{Shift-lowerbound}, $\tilde\lambda_h$ is still a lower bound of $\lambda$ if $H^{6k+2-6\eps}\ll h^{2k}$. \textcolor{red}{(Again, this does not make sense. The optimal error suggest us to choose $H^{3k+1-3\eps} = h^{k-\eps}$, plug back in and we obtain $h^{2k-2\eps} \ll h^{2k}$, which simply is wrong!)}

\begin{remark}
  The assumptions in Theorem \ref{Error_Theorem_Shift} requires $h^{2k-2\eps} \leq \frac{1}{4} H^{2k}$ which can be easily satisfied.  For example,  the error estimate suggests to choose $H^{3k+1-3\eps} = h^{k-\eps}$.  Therefore, the requirement becomes $H^{6k+2-6\eps} \leq \frac{1}{4} H^{2k}$ which holds when $H$ is sufficiently small.
\end{remark}

\subsection{Biharmonic eigenvalue problem}
Consider the biharmonic eigenvalue problem
\begin{eqnarray}\label{biharmonic-eq1}
\Delta^2 u &=& \lambda u,\quad\text{ in }\Omega,
\\ \label{biharmonic-eq2}
u= \frac{\partial u}{\partial n}&=& 0,\quad\text{ on }\partial\Omega,
\\ \label{biharmonic-eq3}
\int_\Omega u^2 &=& 1,
\end{eqnarray}
where $\Omega$ is a polygon or polyhedral domain in $\Real^d(d=2,3)$.

For biharmonic problem, the weak Galerkin finite element space is defined as follows,
\begin{eqnarray*}
V_h=\{(v_0,v_b,v_n):v_0\in P_k(T), v_b\in P_{k-1}(e), v_n\in P_{k-1}(e),
\text{ and } v_b=v_n=0 \text{ on }\partial\Omega\},
\end{eqnarray*}
where $k\ge 2$ is an integer.  We define some projections onto $V_h$ as usual. Denote $Q_0$ the
$L^2$ projection onto $P_k(T)$ on each element $T$, $Q_b$ the
$L^2$ projection onto $P_{k-1}(e)$ on each element $e$, and
$Q_h v=\{Q_0 v,Q_b v,Q_b(\nabla v\cdot\bn_e)\}$ is a projection operator onto $V_h$. Moreover, on the finite element space $V_h$, we define the weak Laplacian operator $\Delta_w$ by distribution as follows,
\begin{definition}
For any $v\in V_h$, $\Delta_w v\in
P_{k-2}(T)$ is the unique polynomial
satisfying on each element $T\in \T_h$,
\begin{eqnarray*}
(\Delta_w v,\varphi)_T = (v_0,\Delta\varphi)_T - \langle
v_b,\nabla\varphi\cdot\bn\rangle_{\partial T}
+\langle v_n(\bn_e\cdot\bn),\varphi\rangle_{\partial T},
\quad\forall\varphi\in P_{k-2}(T),
\end{eqnarray*}
where $\bn$ is the unit outward normal vector and $\bn_e$ is the unit normal vector on each edge.
\end{definition}

Now we introduce the bilinear forms on $V_h$.  For any $v,w\in V_h$, define
\begin{eqnarray*}
a_w(v,w)&=&(\Delta_w v,\Delta_w w)+s(v,w),
\\
b_w(v,w)&=&(v_0,w_0),
\end{eqnarray*}
where
\begin{align*}
s(v,w)&=\sumT h_T^{-3+\eps}\langle Q_b v_0-v_b,
Q_b w_0-w_b\rangle_{\partial T}  \\
& \quad +\sumT h_T^{-1+\eps}\langle \nabla v_0\cdot\bn_e-v_n,
\nabla w_0\bn_e-w_n\rangle_{\partial T},
\end{align*}
and $0<\eps<1$ is a positive constant to be chosen.
Furthermore, define
\begin{eqnarray*}
\trb{v}^2=a_w(v,v).
\end{eqnarray*}
And according to Lemma \ref{biharmonic-norm}, $\trb\cdot$ indeed defines
a norm on $V_h$.


For the weak Galerkin scheme for biharmonic eigenvalue problem, the following convergence theorem holds true as shown in Theorem \ref{Error_Theorem_Biharmonic} and \ref{biharmonic-lowerbound} in Appendix \ref{appendix}.
\begin{lemma}\label{Error_Lemma_Biharmonic}
Let $\lambda_{i,h}$ be the $i$-th approximate eigenvalue obtained by Algorithm \ref{wg-scheme} and $u_{i,h}$ be the corresponding eigenvector.  There exists an exact eigenvalue $\lambda_i$ and the corresponding exact eigenfunction $u_i$ such that, if $u_i\in H^{k+2}(\Omega)\cap H_0^2(\Omega)$, the following error estimates hold
\begin{eqnarray}
&&h^{2k-2}\lesssim \lambda_i-\lambda_{i,h}\lesssim Ch^{2k-2-2\eps},
\\
&&\trb{Q_hu_i-u_{i,h}} \lesssim h^{k-1-\eps},
\\
&&\|u_i-u_{i,h}\|\lesssim h^{k+k_0-2-\eps},
\end{eqnarray}
where $k_0=\min\{k,3\}$.
\end{lemma}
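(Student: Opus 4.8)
The plan is to place the analysis inside the classical Babuška–Osborn spectral approximation framework, which reduces all three estimates to a priori error bounds for the associated biharmonic \emph{source} problem. First I would introduce the solution operator $T$ defined by $a(Tf,v)=b(f,v)$ for all $v\in V$, together with its weak Galerkin counterpart $T_h:L^2(\Omega)\to V_h$ defined by $a_w(T_hf,v)=(f,v_0)$ for all $v\in V_h$. Since the exact and discrete eigenvalues are the reciprocals of the eigenvalues of $T$ and $T_h$, and the eigenfunctions coincide with those of the source operators, the spectral theory shows that the eigenvalue and eigenfunction errors are governed by $\trb{(T-T_h)f}$ and $\|(T-T_h)f\|$ for $f$ ranging over the finite-dimensional eigenspace $M$.

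Second, I would establish the two source-problem estimates. For the energy bound $\trb{Q_hu-u_h}\lesssim h^{k-1-\eps}$, the route is to write the error equation, exploit the key commuting property of the weak Laplacian (that $(\Delta_w Q_h u,\varphi)_T$ reproduces $(\Delta u,\varphi)_T$ on each $T$ up to boundary terms absorbed by $s(\cdot,\cdot)$), and then combine coercivity of $a_w$ — available because $\trb{\cdot}$ is a norm by Lemma \ref{biharmonic-norm} — with the approximation properties of $Q_h$ on $H^{k+2}$ data; the factor $h^{-\eps}$ is produced by the stabilization weights $h_T^{-3+\eps}$ and $h_T^{-1+\eps}$. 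For the $L^2$ bound $\|u-u_h\|\lesssim h^{k+k_0-2-\eps}$ I would run an Aubin–Nitsche duality argument against the dual biharmonic problem; the cap $k_0=\min\{k,3\}$ reflects the ceiling on the elliptic regularity pickup of the dual solution on a polygonal domain together with the approximation order of $\Delta_w$, so that the duality gain saturates after three orders.

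Third, I would convert these into the eigenvalue estimates. Writing the Rayleigh-quotient expansion for the normalized discrete eigenpair yields, as in the conforming theory, a leading identity
\[
\lambda_{i,h}-\lambda_i=\trb{Q_hu_i-u_{i,h}}^2-\lambda_i\,\|u_i-u_{i,h}\|^2+R_h,
\]
where $R_h$ collects the consistency contributions arising from the nonconformity of $V_h$ and the stabilization $s(\cdot,\cdot)$. The upper bound $\lambda_i-\lambda_{i,h}\lesssim Ch^{2k-2-2\eps}$ then follows immediately by squaring the energy estimate and noting that the $L^2$ term is of strictly higher order. The hard part, and the main obstacle, is the lower bound $h^{2k-2}\lesssim\lambda_i-\lambda_{i,h}$: one must prove that $\lambda_i-\lambda_{i,h}$ is genuinely positive, so that $\lambda_{i,h}$ approaches $\lambda_i$ from below, and that it cannot decay faster than the $\eps$-free rate $h^{2k-2}$. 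This is precisely the nonconforming lower-bound phenomenon and does not follow from the norm estimates alone, since $\trb{Q_hu_i-u_{i,h}}^2$ and $R_h$ are of comparable order $h^{2k-2-2\eps}$ and their difference is sign-indefinite at the level of magnitudes. I would therefore carry out a refined, sign-sensitive analysis showing that the leading consistency term dominates with the favorable sign and is bounded below by a best-approximation quantity of exact order $h^{2k-2}$, adapting the mechanism developed for the Laplacian case in \cite{Xie2015}, re-deriving the consistency identity for the weak Laplacian, and tracking that the biharmonic stabilization and the regularity exponent $k_0$ leave the asymptotic sign intact.
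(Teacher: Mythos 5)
Your overall route coincides with the paper's: the eigenfunction estimate and the upper eigenvalue bound come from the Babu\v{s}ka--Osborn theory \cite{MR1115240} applied to the solution operators $T$ and $T_h$ of the biharmonic source problem (Theorem~\ref{Error_Theorem_Biharmonic}, resting on the source-problem estimates of Theorem~\ref{BVP-errest}, which the paper simply imports from \cite{Zhang2015} rather than re-deriving as you propose), and the lower bound is obtained from a sign-sensitive expansion identity in the spirit of \cite{Xie2015}. So there is no divergence of method; the issue is that the one step you yourself identify as ``the hard part'' is left as a plan, and the identity you write down for it points the wrong way. As stated, $\lambda_{i,h}-\lambda_i=\trb{Q_hu_i-u_{i,h}}^2-\lambda_i\|u_i-u_{i,h}\|^2+R_h$ is the conforming-type expansion in which the energy error pushes $\lambda_{i,h}$ \emph{above} $\lambda_i$. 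The identity the paper actually uses (Lemma~\ref{expansion} evaluated at $v=Q_hu_i$) has the squared terms on the other side:
\[
\lambda_i-\lambda_{i,h}=\|\Delta u_i-\dQ_h\Delta u_i\|^2+s(u_{i,h}-Q_hu_i,\,u_{i,h}-Q_hu_i)-s(Q_hu_i,Q_hu_i)+\bigl(\text{$L^2$-level corrections}\bigr),
\]
so the dominant nonnegative contributions favor $\lambda_i-\lambda_{i,h}>0$ from the outset, rather than having to be extracted from a sign-indefinite remainder.

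Closing the lower bound then requires three concrete ingredients that your sketch only alludes to: (i) the commuting property $\Delta_wQ_hu=\dQ_h\Delta u$ of Lemma~\ref{commu-prop}, which makes the cross term $(\Delta u_i-\Delta_wQ_hu_i,\Delta_wu_{i,h})$ vanish \emph{exactly}, because $\Delta_wu_{i,h}\in P_{k-2}(T)$ and $\dQ_h$ is the $L^2$ projection onto $P_{k-2}(T)$ --- without this exact cancellation the cross term is itself of order $h^{2k-2}$ and sign-indefinite, and the argument collapses; (ii) the saturation bound $\|\Delta u_i-\dQ_h\Delta u_i\|^2\gtrsim h^{2k-2}$ (Lemma~\ref{convergence-rate}, from \cite{Lin2011a}), which supplies the $\eps$-free lower rate; and (iii) the observation that the $+\eps$ in the stabilization weights $h_T^{-3+\eps}$ and $h_T^{-1+\eps}$ forces $s(Q_hu_i,Q_hu_i)\lesssim h^{2k-2+2\eps}$, while the $L^2$-level corrections are of order $h^{2k+2k_0-4-2\eps}$, so every term of unfavorable sign is of strictly higher order than $h^{2k-2}$. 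Items (i)--(iii) are precisely what resolves the ``comparable order, sign-indefinite'' difficulty you correctly flag; until they are supplied, the bound $h^{2k-2}\lesssim\lambda_i-\lambda_{i,h}$ is not established in your write-up.
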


According to Lemma \ref{Error_Lemma_Biharmonic}, we have
\begin{equation*}
\delta_h\lesssim h^{2k-2-2\eps}, \sigma_h\gtrsim Ch^{2k-2}, \eta_h\lesssim h^{k+k_0-2-\eps}, \ \text{and} \ \gamma_h\lesssim h^{k-1-\eps}.
\end{equation*}
Let ($\tilde\lambda_h$, $\tilde u_h$) be the approximate eigenpair of the shifted-inverse
power weak Galerkin Algorithm \ref{shift-alg} and $\bar u_h=\tilde u_h/\|\tilde u_h\|$.
From Theorem \ref{Error_Theorem_Shift}, When $h^{2k-2-2\eps} \leq \frac{1}{4} H^{2k-2}$, we have
\begin{eqnarray*}
|\lambda-\tilde\lambda_h|&\lesssim& \delta_H^{2}\eta_H^2+\delta_h\lesssim H^{6k+2k_0-8-6\eps}+h^{2k-2-2\eps},
\\
\trb{Q_hu-\bar u_h}&\lesssim& \delta_H\eta_H+\gamma_h\lesssim H^{3k+k_0-4-3\eps}+h^{k-1-\eps}.
\end{eqnarray*}
Moreover, since $\lambda_h$ is a lower bound of $\lambda$, from Corollary \ref{Shift-lowerbound}
it follows that when $H^{6k+2k_0-8-10\eps}\ll h^{2k-2}$, $\tilde\lambda$ is still a lower bound of $\lambda$. \textcolor{red}{(Again, double check this!)}

\section{Numerical Experiments}
In this section, we present some numerical results to show the efficiency of the shifted-inverse power weak Galerkin method and verify the theoretical analysis in the previous sections.
\subsection{Example 1}
Consider the Laplacian eigenvalue problem \eqref{Laplacian-eq1}-\eqref{Laplacian-eq3} on a unit
square domain $(0,1)\times(0,1)$.
The exact eigenvalues are
\begin{eqnarray*}
  \lambda=(m^2+n^2)\pi^2,
\end{eqnarray*}
and the corresponding eigenfunctions are
\begin{eqnarray*}
  u=\sin(\pi mx)\sin(\pi ny),
\end{eqnarray*}
where $m$, $n$ are positive integers.  We solve the problem (\ref{problem-eq1}) by Algorithm \ref{shift-alg}. The uniform mesh is employed and the parameter $\eps$ is set to be $0.1$. For the case $k=1$, the error of first six eigenvalues and eigenfunctions are listed in Tables \ref{Tables_Exam1_1} and \ref{Tables_Exam1_2}.
When $H \leq 1/8$ and fix $h = 1/512$,  the coarse mesh size is sufficiently small and the error is dominated by the term related to $h$.  It should also be noticed that all the numerical eigenvalues are lower bounds,
which coincides with the theoretical prediction.

\begin{table}[!htp]
\centering \caption{The errors for the eigenvalue approximations $\lambda-\tilde\lambda_h$
for Example 1.}\label{Tables_Exam1_1}
\begin{tabular}{|c|c|c|c|c|c|c|c|c|c|c|c|c|c|}
\hline
$H$   &    1/8   &    1/16  &    1/32  &    1/64  \\ \hline
$h$   &    1/512 &    1/512 &    1/512 &    1/512 \\ \hline
$\lambda_1-\tilde \lambda_{1,h}$ & 5.9031e-4 & 5.9045e-4 & 5.9045e-4 & 5.9045e-4 \\ \hline
$\lambda_2-\tilde \lambda_{2,h}$ & 3.7769e-3 & 3.8294e-3 & 3.8295e-3 & 3.8295e-3 \\ \hline
$\lambda_3-\tilde \lambda_{3,h}$ & 3.7805e-3 & 3.8294e-3 & 3.8295e-3 & 3.8295e-3 \\ \hline
$\lambda_4-\tilde \lambda_{4,h}$ & 9.0348e-3 & 9.4457e-3 & 9.4464e-3 & 9.4464e-3 \\ \hline
$\lambda_5-\tilde \lambda_{5,h}$ & 1.2305e-2 & 1.5744e-2 & 1.5750e-2 & 1.5750e-2 \\ \hline
$\lambda_6-\tilde \lambda_{6,h}$ & 1.2293e-2 & 1.5744e-2 & 1.5750e-2 & 1.5750e-2 \\ \hline
\end{tabular}
\end{table}

\begin{table}[!htp]
\centering \caption{The errors for the eigenfunction approximations $\trb{Q_h u-\tilde u_h}$
for Example 1.}\label{Tables_Exam1_2}
\begin{tabular}{|c|c|c|c|c|c|c|c|c|c|c|c|c|c|}
\hline
$H$   &    1/8   &    1/16  &    1/32  &    1/64  \\ \hline
$h$   &    1/512 &    1/512 &    1/512 &    1/512 \\ \hline
$\trb{Q_h u_1-\tilde u_{1,h}}$ & 2.2642e-2 & 2.2639e-2 & 2.2639e-2 & 2.2639e-2 \\ \hline
$\trb{Q_h u_2-\tilde u_{2,h}}$ & 5.8240e-2 & 5.7795e-2 & 5.7795e-2 & 5.7795e-2 \\ \hline
$\trb{Q_h u_3-\tilde u_{3,h}}$ & 5.8240e-2 & 5.7795e-2 & 5.7795e-2 & 5.7795e-2 \\ \hline
$\trb{Q_h u_4-\tilde u_{4,h}}$ & 9.2885e-2 & 9.0558e-2 & 9.0554e-2 & 9.0554e-2 \\ \hline
$\trb{Q_h u_5-\tilde u_{5,h}}$ & 1.3216e-1 & 1.1744e-1 & 1.1742e-1 & 1.1742e-1 \\ \hline
$\trb{Q_h u_6-\tilde u_{6,h}}$ & 1.3216e-1 & 1.1744e-1 & 1.1742e-1 & 1.1742e-1 \\ \hline
\end{tabular}
\end{table}

{\color{black}
\subsection{Example 2}
Consider the biharmonic eigenvalue problem (\ref{biharmonic-eq1})-(\ref{biharmonic-eq3}) on a unit
square domain $(0,1)\times(0,1)$.
The first eigenvalue is $\lambda = 1.2949339598e + 003$.
We solve the problem (\ref{problem-eq1}) by Algorithm
\ref{shift-alg}. The uniform mesh is employed and the
parameter $\eps$ is set to be $0.1$. For the case $k=3$, the error
of the first eigenvalue are listed in Tables \ref{Tables_Exam1_3}.
From Tables \ref{Tables_Exam1_3}, we can see that $H \leq 1/16$, the coarse mesh size is sufficiently small and the error is dominated by the term related to $h$.  It should also be noticed that all the numerical eigenvalues are lower bounds,  which coincides with the theoretical prediction.

\begin{table}[!htp]
\centering \caption{The errors for the first eigenvalue
for Example 2.}\label{Tables_Exam1_3}
\begin{tabular}{|c|c|c|c|c|c|c|c|c|c|c|c|c|c|}
\hline
$H$   &    1/8   &    1/16  &    1/32  &    1/64  \\ \hline
$h$   &    1/128 &    1/128 &    1/128 &    1/128 \\ \hline
$\lambda$ & \multicolumn{4}{|c|}{1294.933959} \\ \hline
$\tilde\lambda_h$ & 1294.925382 & 1294.914299 & 1294.914289 & 1294.914289 \\ \hline
$\lambda -\tilde \lambda_{h}$ & 8.5772e-3 & 1.9660e-2 & 1.9669e-2 & 1.9670e-2\\ \hline \hline
$H$   &    1/8   &    1/16  &    1/32  &    1/64  \\ \hline
$h$   &    1/256 &    1/256 &    1/256 &    1/256 \\ \hline
$\lambda$ & \multicolumn{4}{|c|}{1294.933959} \\ \hline
$\tilde\lambda_h$ & 1294.943654 & 1294.932580 & 1294.932570 & 1294.932569\\ \hline
$\lambda -\tilde \lambda_{h}$ & -9.6946e-3 & 1.3795e-3 & 1.3897e-3 & 1.3904e-3\\ \hline
\end{tabular}
\end{table}
}

\appendix
\section{Error Analysis for the Biharmonic Eigenvalue Problem} \label{appendix}
In this section, we shall give the error analysis for Algorithm
\ref{wg-scheme} solving the biharmonic eigenvalue problem (\ref{biharmonic-eq1})-(\ref{biharmonic-eq3}).
Both the error of eigenvalues and eigenfunctions are analyzed and the lower bound estimate is also given.

\subsection{Preliminaries}
Define a semi-norm on $V_h$ that for any $v\in V_h$ as follows,
\begin{eqnarray*}
\trb{v}^2=a_w(v,v).
\end{eqnarray*}
Next lemma show that it actually is a norm.
\begin{lemma}\label{biharmonic-norm}
  $\trb{\cdot}$ defines a norm on $V_h$.
\end{lemma}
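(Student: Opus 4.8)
The plan is to check the three norm axioms for $\trb{\cdot}$. Since $\trb{v}^2=a_w(v,v)$ is a nonnegative symmetric quadratic form, homogeneity and the triangle inequality come for free, so $\trb{\cdot}$ is automatically a semi-norm and the only substantive point is positive definiteness: I must show $\trb{v}=0$ forces $v=(v_0,v_b,v_n)=0$. Because $a_w(v,v)=(\Delta_w v,\Delta_w v)+s(v,v)$ is a sum of two nonnegative terms, $\trb{v}=0$ makes each vanish separately, so $\Delta_w v=0$ on every $T\in\T_h$ and $s(v,v)=0$. From the two pieces of $s(v,v)$ I read off the pointwise edge identities $v_b=Q_b v_0$ and $v_n=\nabla v_0\cdot\bn_e$ on $\partial T$ for every element.

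First I would use $\Delta_w v=0$ to prove $v_0$ is harmonic on each element. Integrating the term $(v_0,\Delta\varphi)_T$ in the definition of $\Delta_w v$ by parts twice and substituting gives, for every $\varphi\in P_{k-2}(T)$,
\[
(\Delta_w v,\varphi)_T=(\Delta v_0,\varphi)_T+\langle v_0-v_b,\nabla\varphi\cdot\bn\rangle_{\partial T}+\langle v_n(\bn_e\cdot\bn)-\nabla v_0\cdot\bn,\varphi\rangle_{\partial T}.
\]
Both boundary terms vanish: on each edge $\nabla\varphi\cdot\bn$ lies in $P_{k-1}(e)$ while $v_0-v_b=v_0-Q_b v_0$ is $L^2(e)$-orthogonal to $P_{k-1}(e)$, and since $\bn=(\bn_e\cdot\bn)\bn_e$ on an edge the second identity gives $\nabla v_0\cdot\bn=(\bn_e\cdot\bn)v_n$, cancelling the last term. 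Hence $(\Delta v_0,\varphi)_T=0$ for all $\varphi\in P_{k-2}(T)$, and the choice $\varphi=\Delta v_0\in P_{k-2}(T)$ yields $\Delta v_0=0$ on each $T$.

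The step I expect to be the real obstacle is upgrading ``$v_0$ harmonic on each element'' to ``$v_0=0$'': unlike the Laplacian case, where the energy directly controls $\nabla_w v$ and forces $v_0$ constant, here the energy only controls $\Delta_w v$. My plan is a broken $H^1$ energy identity. Since $\Delta v_0=0$, Green's formula gives $\sum_T\|\nabla v_0\|_T^2=\sum_T\langle v_0,\nabla v_0\cdot\bn\rangle_{\partial T}$, and I would split $v_0=(v_0-v_b)+v_b$ on each $\partial T$. The $(v_0-v_b)$ part dies edge-by-edge because $\nabla v_0\cdot\bn\in P_{k-1}(e)$ while $v_0-Q_b v_0\perp P_{k-1}(e)$; the $v_b$ part dies because $v_b$ is single-valued whereas $\nabla v_0\cdot\bn=(\bn_e\cdot\bn)v_n$ changes sign across an interior edge (so the two element contributions cancel), and $v_b=0$ on boundary edges. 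This forces $\sum_T\|\nabla v_0\|_T^2=0$, so $v_0$ is constant on each element. Constancy then gives $v_n=\nabla v_0\cdot\bn_e=0$ and $v_b=Q_b v_0=v_0$; single-valuedness of $v_b$ across interior edges propagates the constant through the connected mesh, and $v_b=0$ on $\partial\Omega$ pins it to zero, whence $v_0\equiv 0$ and therefore $v_b=v_n=0$. The crucial observation throughout is the orthogonality $v_0-Q_b v_0\perp P_{k-1}(e)$ paired with the fact that $\nabla\varphi\cdot\bn$ and $\nabla v_0\cdot\bn$ both restrict to elements of $P_{k-1}(e)$ on each edge, which is exactly what makes the stabilization-induced couplings annihilate the stray boundary terms.
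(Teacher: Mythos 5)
Your proof is correct, and its first half is essentially the paper's own computation in disguise: you reduce $\trb{v}=0$ to $\Delta_w v=0$, $v_b=Q_b v_0$ and $v_n=\nabla v_0\cdot\bn_e$, then show $\Delta v_0=0$ on each $T$ by testing the integrated-by-parts identity with $\varphi=\Delta v_0\in P_{k-2}(T)$, which is the elementwise version of the paper's chain $(\Delta v_0,\Delta v_0)=(\Delta_w v,\Delta v_0)=0$. Where you genuinely diverge is the passage from ``$v_0$ piecewise harmonic'' to ``$v_0=0$''. The paper simply asserts at that point that $v_0\in H_0^2(\Omega)$ and invokes uniqueness for the Poisson problem; but $Q_b v_0=v_b$ only forces the two traces of $v_0\in P_k(T)$ on an interior edge to agree in their $P_{k-1}(e)$ moments, so $v_0$ need not even be continuous across edges and the $H^2$ claim is not justified as written. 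Your broken $H^1$ energy identity --- $\sum_{T\in\T_h}\|\nabla v_0\|_T^2=\sum_{T\in\T_h}\langle v_0,\nabla v_0\cdot\bn\rangle_{\partial T}$, with the $(v_0-Q_b v_0)$ contribution annihilated by orthogonality against $\nabla v_0\cdot\bn\in P_{k-1}(e)$ and the $v_b$ contribution cancelling across interior edges because $v_b$ and $v_n$ are single-valued while $\bn_e\cdot\bn$ flips sign, and vanishing on $\partial\Omega$ because $v_b=0$ there --- avoids any global regularity claim and yields $v_0$ piecewise constant; connectivity of the mesh and the boundary condition then pin the constant to zero, after which $v_n=0$ and $v_b=0$ follow. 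So your route is not only valid but actually closes a gap in the paper's argument: the cost is a slightly longer proof, the gain is that every step uses only the elementwise and moment-level information that the hypotheses genuinely provide.
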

\begin{proof}
Notice that if $\trb{v}=0$, then on each element $T\in\T_h$
we have $\Delta_w v=0$ in $T$,
$Q_b v_0=v_b$ and $\nabla v_0\cdot\bn_e=v_n$ on $\partial T$, which implies
\begin{eqnarray*}
&&(\Delta v_0,\Delta v_0)
\\
&=&(v_0,\Delta(\Delta v_0))-\sumT\langle v_0,\nabla(\Delta v_0)
\cdot\bn\rangle_{\partial T}+\sumT\langle \nabla v_0\cdot\bn,\Delta v_0
\rangle_{\partial T}
\\
&=&(v_0,\Delta(\Delta v_0))-\sumT\langle v_b,\nabla(\Delta v_0)
\cdot\bn\rangle_{\partial T}+\sumT\langle \nabla v_n\bn_e\cdot\bn,\Delta v_0
\rangle_{\partial T}
\\
&&-\sumT\langle Q_b v_0-v_b,\nabla(\Delta v_0)
\cdot\bn\rangle_{\partial T}+\sumT\langle (\nabla v_0\cdot\bn_e-v_n)\bn_e\cdot\bn,\Delta v_0
\rangle_{\partial T}
\\
&=&(\Delta_w v,\Delta v_0)=0.
\end{eqnarray*}
Then we know that $v_0\in H_0^2(\Omega)$ and $\Delta v_0=0$.
From the uniqueness of the Poisson equation, it follows that
$v=0$, which means $\trb{\cdot}$ is a norm on $V_h$.
\end{proof}

For the convenience of analysis, we introduce another semi-norm on $V_h$. For any $v\in V_h$,
define
\begin{eqnarray*}
\trb{v}_1^2&=&(\Delta_w v,\Delta_w v)+\sumT h_T^{-3}\langle Q_b v_0-v_b,
Q_b v_0-v_b\rangle_{\partial T}
\\
&&+\sumT h_T^{-1}\langle \nabla v_0\cdot\bn_e-v_n,
\nabla v_0\cdot\bn_e-v_n\rangle_{\partial T}.
\end{eqnarray*}
Similar to $\trb{\cdot}$, $\trb{\cdot}_1$ also defines
a norm on $V_h$. Obviously, the two norms have the following
relationship
\begin{eqnarray*}
\trb{v}\lesssim \trb{v}_1 \lesssim h^{-\frac\eps 2}\trb{v},\quad\forall
v\in V_h.
\end{eqnarray*}
Denote $V_0=H_0^2(\Omega)$ and $V=V_0+V_h$. For any
$v\in V$, define
\begin{eqnarray*}
\|v\|_V^2&=&(\Delta v,\Delta v)+\sumT h_T^{-3}\langle Q_b v_0-v_b,
Q_b v_0-v_b\rangle_{\partial T}
\\
&&+\sumT h_T^{-1}\langle \nabla v_0\cdot\bn_e-v_n,
\nabla v_0\cdot\bn_e-v_n\rangle_{\partial T}.
\end{eqnarray*}
For $v\in V_0$, $v_b$ stands for the trace of $v$ on $\partial T$ and $v_n$ stands for $\nabla v\cdot\bn_e$. In the following lemmas we show that $\|\cdot \|_v$ defines a norm on $V$, which is equivalent to $H^2$ norm on $V_0$ and $\trb{\cdot}_1$ norm on $V_h$.  And the proofs are similar to Lemma 4.1 and Lemma 4.2 in \cite{Xie2015} which, therefore, will be omitted here.
\begin{lemma}
For any $v\in V_0$,
\begin{eqnarray*}
\|v\|_V \simeq \|v\|_2.
\end{eqnarray*}
\end{lemma}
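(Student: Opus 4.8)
The plan is to prove the two-sided bound by exploiting the fact that, for $v\in V_0=H_0^2(\Omega)$, the conventions $v_0=v$, $v_b=v|_{\partial T}$ and $v_n=\nabla v\cdot\bn_e$ make two of the three contributions to $\|v\|_V$ collapse. Indeed, the last sum in the definition of $\|\cdot\|_V$ involves $\nabla v_0\cdot\bn_e-v_n=\nabla v\cdot\bn_e-\nabla v\cdot\bn_e=0$, so it vanishes identically, while in the second sum $Q_bv_0-v_b=Q_b(v|_{\partial T})-v|_{\partial T}=-(I-Q_b)(v|_{\partial T})$ is merely the edgewise $L^2$-projection error of the trace of $v$. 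Thus
\[
\|v\|_V^2=\|\Delta v\|^2+\sumT h_T^{-3}\,\big\|(I-Q_b)(v|_{\partial T})\big\|_{L^2(\partial T)}^2,
\]
and the whole argument reduces to controlling these two terms against $\|v\|_2^2$ from both sides.

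For the lower bound I would discard the nonnegative projection term and keep $\|v\|_V^2\ge\|\Delta v\|^2$. Integrating by parts twice and using that $v$ and $\nabla v$ vanish on $\partial\Omega$ (so all boundary contributions disappear) gives the classical identity $\|\Delta v\|^2=\sum_{i,j}\|\partial_{ij}v\|^2=|v|_{H^2(\Omega)}^2$. A Poincar\'e inequality on $H_0^1(\Omega)$, applied first to $v$ and then componentwise to $\nabla v\in (H_0^1(\Omega))^d$, upgrades the seminorm to the full norm, $|v|_{H^2(\Omega)}\simeq\|v\|_2$. Hence $\|v\|_V\gtrsim\|v\|_2$.

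For the upper bound the estimate $\|\Delta v\|^2\le\|v\|_2^2$ is immediate, so the real work is the projection sum. On each element $T$ I would use optimality of $Q_b$ to replace $v|_e$ by the trace of any degree-$(k-1)$ polynomial; since $k\ge2$ a linear Bramble--Hilbert approximant $\tilde v$ of $v$ is admissible and is reproduced edgewise by $Q_b$, whence $\|(I-Q_b)(v|_e)\|_{L^2(e)}\le\|(v-\tilde v)|_e\|_{L^2(e)}$. Applying the scaled trace inequality $\|w\|_{L^2(\partial T)}^2\lesssim h_T^{-1}\|w\|_{L^2(T)}^2+h_T\|\nabla w\|_{L^2(T)}^2$ to $w=v-\tilde v$, together with the Bramble--Hilbert estimates $\|v-\tilde v\|_{L^2(T)}\lesssim h_T^2|v|_{H^2(T)}$ and $\|\nabla(v-\tilde v)\|_{L^2(T)}\lesssim h_T|v|_{H^2(T)}$, yields $\|(I-Q_b)(v|_{\partial T})\|_{L^2(\partial T)}^2\lesssim h_T^3|v|_{H^2(T)}^2$. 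The factor $h_T^{-3}$ then cancels exactly, and summing over $T$ gives $\sumT h_T^{-3}\|(I-Q_b)(v|_{\partial T})\|_{L^2(\partial T)}^2\lesssim|v|_{H^2(\Omega)}^2\le\|v\|_2^2$, so that $\|v\|_V\lesssim\|v\|_2$.

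I expect the delicate step to be this last estimate: one must match the negative power $h_T^{-3}$ with exactly three powers of $h_T$ from approximation, which forces the use of a polynomial approximant of degree at least one (guaranteed by $k\ge2$) and the correctly scaled trace inequality; getting the powers of $h_T$ to balance is where a careless argument would fail. The remaining ingredients — the integration-by-parts identity and the Poincar\'e estimates on $H_0^2(\Omega)$ — are standard and, as the authors note, parallel Lemmas 4.1--4.2 of \cite{Xie2015}.
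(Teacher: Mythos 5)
Your argument is correct, and it is essentially the proof the paper itself defers to (the authors omit it, citing its similarity to Lemmas 4.1--4.2 of the Laplacian case in \cite{Xie2015}): the normal-derivative stabilizer vanishes identically on $V_0$, the edge stabilizer is bounded by $h_T^{3}|v|_{H^2(T)}^2$ via a degree-one Bramble--Hilbert approximant (admissible since $k\ge 2$) combined with the scaled trace inequality, and the equivalence $\|\Delta v\|\simeq \|v\|_2$ on $H_0^2(\Omega)$ follows from the integration-by-parts identity plus Poincar\'e. No gaps; you have correctly identified the only delicate point, namely the exact cancellation of the $h_T^{-3}$ weight.
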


\begin{lemma}
For any $v\in V_h$,
\begin{eqnarray*}
\|v\|_V \simeq \trb{v}_1.
\end{eqnarray*}
\end{lemma}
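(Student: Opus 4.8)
The plan is to exploit the fact that the two seminorms $\trb{\cdot}_1$ and $\|\cdot\|_V$ are built from \emph{identical} stabilization sums; on $V_h$ they differ only in their leading terms, namely $(\Delta_w v,\Delta_w v)$ versus $(\Delta v_0,\Delta v_0)$, where $\Delta v_0$ denotes the element-wise classical Laplacian of the interior component. Writing $S(v)$ for the common stabilization part,
\[
S(v)^2 = \sumT h_T^{-3}\langle Q_b v_0-v_b, Q_b v_0-v_b\rangle_{\partial T} + \sumT h_T^{-1}\langle \nabla v_0\cdot\bn_e-v_n, \nabla v_0\cdot\bn_e-v_n\rangle_{\partial T},
\]
the whole claim reduces to the single estimate $\|\Delta_w v-\Delta v_0\| \lesssim S(v)$. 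Once this is in hand, two applications of the triangle inequality give $\|\Delta v_0\|\lesssim\|\Delta_w v\|+S(v)$ and $\|\Delta_w v\|\lesssim\|\Delta v_0\|+S(v)$, and adding $S(v)^2$ to both sides yields the two-sided bound $\|v\|_V \simeq \trb{v}_1$.

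To establish the key estimate I would first apply Green's identity element-wise to $(\Delta v_0,\varphi)_T$ and subtract the defining identity of $\Delta_w v$. For any $\varphi\in P_{k-2}(T)$ the interior integrals $(v_0,\Delta\varphi)_T$ cancel, leaving only boundary contributions,
\[
(\Delta_w v-\Delta v_0,\varphi)_T = \langle v_0-v_b, \nabla\varphi\cdot\bn\rangle_{\partial T} - (\bn_e\cdot\bn)\langle \nabla v_0\cdot\bn_e-v_n, \varphi\rangle_{\partial T},
\]
where I have used $\nabla v_0\cdot\bn = (\bn_e\cdot\bn)\,\nabla v_0\cdot\bn_e$ on each face. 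Since $\nabla\varphi\cdot\bn$ restricted to a face lies in $P_{k-3}(e)\subset P_{k-1}(e)$ (this is exactly where $k\ge 2$ is used), the $L^2$-projection $Q_b$ may be inserted for free, replacing $v_0-v_b$ by $Q_b v_0-v_b$ so that the stabilization quantities appear explicitly.

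The decisive step is then to choose $\varphi=\Delta_w v-\Delta v_0\in P_{k-2}(T)$ and to bound the two boundary terms by scaled trace and inverse inequalities, $\|\nabla\varphi\cdot\bn\|_{\partial T}\lesssim h_T^{-3/2}\|\varphi\|_T$ and $\|\varphi\|_{\partial T}\lesssim h_T^{-1/2}\|\varphi\|_T$. After dividing through by $\|\Delta_w v-\Delta v_0\|_T$ this produces
\[
\|\Delta_w v-\Delta v_0\|_T \lesssim h_T^{-3/2}\|Q_b v_0-v_b\|_{\partial T} + h_T^{-1/2}\|\nabla v_0\cdot\bn_e-v_n\|_{\partial T},
\]
and squaring and summing over $T\in\T_h$ recovers precisely $S(v)^2$ on the right. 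The main obstacle, and the place where care is required, is this matching: one must check that the powers of $h_T$ supplied by the inverse estimates are exactly $-3$ and $-1$, so that the bound reproduces the \emph{same} stabilization weights occurring in $\trb{\cdot}_1$ and $\|\cdot\|_V$ rather than weights off by a factor of $h_T^{\pm 1}$. As the text notes, this argument parallels Lemmas~4.1--4.2 of \cite{Xie2015}, the only genuine change being the replacement of the single stabilization of the second-order case by the two stabilizations ($Q_b v_0-v_b$ and $\nabla v_0\cdot\bn_e-v_n$) natural to the biharmonic weak Laplacian.
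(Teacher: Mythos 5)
Your proof is correct and is precisely the argument the paper has in mind: the paper itself omits the proof, deferring to Lemmas 4.1--4.2 of \cite{Xie2015}, and your reduction to the single estimate $\|\Delta_w v-\Delta v_0\|\lesssim S(v)$ via element-wise Green's identity, insertion of $Q_b$ (legitimate since $\nabla\varphi\cdot\bn|_e\in P_{k-3}(e)\subset P_{k-1}(e)$ and $Q_b v_b=v_b$), the test choice $\varphi=\Delta_w v-\Delta v_0$, and scaled trace/inverse inequalities is exactly that argument adapted from the weak gradient to the weak Laplacian. The point you flag as delicate does work out: $h_T^{-3/2}$ and $h_T^{-1/2}$ square to the weights $h_T^{-3}$ and $h_T^{-1}$ appearing in both $\trb{\cdot}_1$ and $\|\cdot\|_V$, so the two-sided equivalence follows from the triangle inequality as you describe.
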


Consider the following biharmonic equation
\begin{eqnarray}\label{BVP-eq1}
-\Delta^2 u &=& f,\quad \text{ in }\Omega,
\\ \label{BVP-eq2}
u=\dfrac{\partial u}{\partial n}&=& 0,\quad\text{ on }\partial\Omega.
\end{eqnarray}
The following weak Galerkin scheme can be established for
problem (\ref{BVP-eq1})-(\ref{BVP-eq2}).
\begin{algorithm1}
Find $u_h\in V_h$, such that
\begin{eqnarray}\label{wgbvp-scheme}
a_w(u_h,v) = (f,v_0),\quad\forall v\in V_h.
\end{eqnarray}
\end{algorithm1}
\textcolor{red}{(What is the $a_w(\cdot, \cdot)$) here?}

The weak Laplacian operator $\Delta_w$ is a good approximation
of $\Delta$ and the following community property holds true.
The proof can be found in Theorem 3.1 in \cite{Zhang2015}
\begin{lemma}\label{commu-prop}
Suppose $v\in H^2(\Omega)$, then the following equality
holds
\begin{eqnarray*}
\Delta_w Q_hv=\dQ_h\Delta v. \text{\textcolor{red}{(Please define $\dQ_h$)}}
\end{eqnarray*}
\end{lemma}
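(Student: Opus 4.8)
The plan is to prove the identity elementwise by testing both sides against an arbitrary polynomial $\varphi\in P_{k-2}(T)$, where $\dQ_h$ denotes the $L^2$ projection onto $P_{k-2}(T)$ on each $T\in\T_h$. Since both $\Delta_w Q_hv$ and $\dQ_h\Delta v$ belong to $P_{k-2}(T)$, it suffices to show $(\Delta_w Q_hv,\varphi)_T=(\Delta v,\varphi)_T$ for every $\varphi\in P_{k-2}(T)$, because the right-hand side equals $(\dQ_h\Delta v,\varphi)_T$ by the defining property of the $L^2$ projection, and two polynomials in $P_{k-2}(T)$ with the same pairing against all of $P_{k-2}(T)$ coincide.

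First I would expand $(\Delta_w Q_hv,\varphi)_T$ using the definition of $\Delta_w$ with the three components of $Q_hv=\{Q_0v,Q_bv,Q_b(\nabla v\cdot\bn_e)\}$, obtaining
\[
(\Delta_w Q_hv,\varphi)_T=(Q_0v,\Delta\varphi)_T-\langle Q_bv,\nabla\varphi\cdot\bn\rangle_{\partial T}+\langle Q_b(\nabla v\cdot\bn_e)(\bn_e\cdot\bn),\varphi\rangle_{\partial T}.
\]
The next step is a degree-counting argument to discard each projection. Since $\varphi\in P_{k-2}(T)$ we have $\Delta\varphi\in P_{k-4}(T)\subset P_k(T)$, so $(Q_0v,\Delta\varphi)_T=(v,\Delta\varphi)_T$; on each edge $\nabla\varphi\cdot\bn\in P_{k-3}(e)\subset P_{k-1}(e)$ and $\varphi\in P_{k-2}(e)\subset P_{k-1}(e)$, so both boundary projections $Q_b$ act trivially against these test functions.

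The key geometric observation is that along each edge $e\subset\partial T$ the fixed edge-normal $\bn_e$ and the outward normal $\bn$ satisfy $\bn=(\bn\cdot\bn_e)\bn_e$ with $\bn\cdot\bn_e=\pm1$ constant on $e$, so that $(\nabla v\cdot\bn_e)(\bn_e\cdot\bn)=\nabla v\cdot\bn$. After removing the projections, the expansion collapses to
\[
(\Delta_w Q_hv,\varphi)_T=(v,\Delta\varphi)_T-\langle v,\nabla\varphi\cdot\bn\rangle_{\partial T}+\langle\nabla v\cdot\bn,\varphi\rangle_{\partial T},
\]
where $v$ and $\nabla v$ on $\partial T$ are understood as traces. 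I would then invoke the second Green identity $(v,\Delta\varphi)_T=(\Delta v,\varphi)_T+\langle v,\nabla\varphi\cdot\bn\rangle_{\partial T}-\langle\nabla v\cdot\bn,\varphi\rangle_{\partial T}$, legitimate since $v\in H^2(\Omega)$ and $\varphi$ is a polynomial. Substituting, the two boundary terms cancel exactly against those already present, leaving $(\Delta_w Q_hv,\varphi)_T=(\Delta v,\varphi)_T$ for all $\varphi\in P_{k-2}(T)$, which is the claim.

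I expect the only delicate point to be the bookkeeping in the third paragraph: confirming that the factor $(\bn_e\cdot\bn)$ is the constant $\pm1$ on each edge so it may be absorbed into $\nabla v\cdot\bn$, and checking that the edge-degree of each test quantity does not exceed $k-1$ so that $Q_b$ can be dropped. The interior projection removal and the integration-by-parts step are otherwise routine.
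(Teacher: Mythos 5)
Your proof is correct, and it is the standard argument: the paper itself does not prove this lemma but defers to Theorem 3.1 of the cited reference \cite{Zhang2015}, whose proof proceeds exactly as you describe — test against $\varphi\in P_{k-2}(T)$, drop each $L^2$ projection by degree counting, absorb the constant factor $\bn_e\cdot\bn=\pm 1$ per edge, and conclude with Green's second identity. The only cosmetic caveat is that for $k=2,3$ some of the polynomial spaces you invoke (e.g.\ $P_{k-4}(T)$) degenerate to $\{0\}$, but the corresponding terms vanish and the argument goes through unchanged.
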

For the biharmonic equation \eqref{BVP-eq1} and \eqref{BVP-eq2}, the following error estimates can be obtained and the detailed proof can be found in \cite{Zhang2015}, Theorem 4.2 and Theorem 5.1.
\begin{theorem}\label{BVP-errest}
Let $u\in H_0^2(\Omega)\cap H^{k+2}(\Omega)$ be the solution of \eqref{BVP-eq1} and \eqref{BVP-eq2} and $u_h$ be the numerical solution of the weak Galerkin scheme \eqref{wgbvp-scheme}, the following estimates
hold
\begin{eqnarray*}
\trb{Q_h u-u_h} &\lesssim& h^{k-1-\frac\eps 2},
\\
\|u-u_h\|_V &\lesssim& h^{k-1-\eps}.
\end{eqnarray*}
\end{theorem}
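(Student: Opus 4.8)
The plan is to follow the standard weak Galerkin error framework: derive an error equation for the projected error $Q_h u-u_h$, estimate the resulting consistency functional against $\trb{\cdot}$, and then deduce the $\|\cdot\|_V$ bound by a triangle inequality together with the norm equivalences established above. The two structural ingredients are the coercivity identity $a_w(v,v)=\trb{v}^2$ and the commutativity property $\Delta_w Q_h u=\dQ_h\Delta u$ of Lemma~\ref{commu-prop}.

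First I would derive the error equation. Writing $f=-\Delta^2 u$, testing against $v_0$ and integrating by parts twice on each element $T$, and on the discrete side using the definition of $\Delta_w$ together with $(\dQ_h\Delta u,\Delta_w v)_T=(\Delta u,\Delta_w v)_T$ (valid since $\Delta_w v\in P_{k-2}(T)$), I expect to reach an identity $a_w(Q_h u,v)=(f,v_0)+\ell_u(v)$ for all $v\in V_h$, where $\ell_u(v)$ gathers the element-boundary terms that survive the integration by parts (traces of $\nabla\Delta u$ and $\Delta u$ paired with $v_0-v_b$ and with $\nabla v_0\cdot\bn_e-v_n$) plus the stabilization $s(Q_h u,v)$. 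Subtracting the scheme \eqref{wgbvp-scheme}, namely $a_w(u_h,v)=(f,v_0)$, yields the clean error equation $a_w(Q_h u-u_h,v)=\ell_u(v)$ for all $v\in V_h$.

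Next I would bound $\ell_u$. Each boundary term is split by Cauchy--Schwarz so that the factor carrying $v$ is matched exactly to the stabilization weights $h_T^{-3+\eps}$ and $h_T^{-1+\eps}$ appearing in $a_w$; this pairing is the decisive point, since it keeps the right-hand side controlled by $\trb{v}$ rather than by $\trb{v}_1$ and thereby avoids losing a factor $h^{-\frac\eps2}$. Using trace inequalities and the approximation properties of $Q_0$, $Q_b$ and $\dQ_h$ for $u\in H^{k+2}(\Omega)$ --- where the governing contribution is the $P_{k-2}$ approximation of $\Delta u$, giving $\|\Delta u-\dQ_h\Delta u\|\lesssim h^{k-1}$ --- I would show $|\ell_u(v)|\lesssim h^{k-1-\frac\eps2}\trb{v}$ (the cross term $s(Q_h u,v)$ being of the higher order $h^{k-1+\frac\eps2}$). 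Taking $v=Q_h u-u_h$ and invoking $a_w(v,v)=\trb{v}^2$ then gives the first estimate $\trb{Q_h u-u_h}\lesssim h^{k-1-\frac\eps2}$. The sharp bookkeeping of these $\eps$-powers, so as not to degrade the rate, is the main obstacle.

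Finally, for the $\|\cdot\|_V$ estimate I would use $\|u-u_h\|_V\le\|u-Q_h u\|_V+\|Q_h u-u_h\|_V$. The first term is a pure approximation error, bounded by $h^{k-1}$ from the definition of $\|\cdot\|_V$ and the projection estimates for $u\in H^{k+2}(\Omega)$. For the second term, since $Q_h u-u_h\in V_h$, the equivalence $\|\cdot\|_V\simeq\trb{\cdot}_1$ on $V_h$ together with $\trb{\cdot}_1\lesssim h^{-\frac\eps2}\trb{\cdot}$ and the energy bound just proved give $\|Q_h u-u_h\|_V\lesssim h^{-\frac\eps2}\,h^{k-1-\frac\eps2}=h^{k-1-\eps}$. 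Combining the two contributions yields $\|u-u_h\|_V\lesssim h^{k-1-\eps}$, which completes the plan.
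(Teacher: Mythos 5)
Your plan is sound and matches the argument the paper relies on: the paper does not prove Theorem~\ref{BVP-errest} itself but defers to Theorems 4.2 and 5.1 of \cite{Zhang2015}, whose proof is exactly the error-equation/consistency-functional argument you describe, with the $h^{-\eps/2}$ loss arising from pairing the boundary residuals with the $h_T^{-3+\eps}$ and $h_T^{-1+\eps}$ stabilization weights. Your $\eps$-bookkeeping (including the $h^{k-1+\eps/2}$ order of the cross term $s(Q_h u,v)$) and the final passage from $\trb{\cdot}$ to $\|\cdot\|_V$ via $\|u-Q_hu\|_V\lesssim h^{k-1}$ and $\trb{v}_1\lesssim h^{-\frac\eps2}\trb{v}$ both check out.
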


\begin{theorem}
Let $u\in H_0^2(\Omega)\cap H^{k+2}(\Omega)$ be the solution of
\eqref{BVP-eq1} and \eqref{BVP-eq2}, $u_h$ be the numerical solution of the weak Galerkin scheme \eqref{wgbvp-scheme}. Assume that the dual problem of \eqref{BVP-eq1} and \eqref{BVP-eq2} has $H^4$
regularity, then the following estimates hold,
\begin{eqnarray*}
\|u-u_h\| &\lesssim& h^{k+k_0-2-\eps},
\end{eqnarray*}
where $k_0=\min\{3,k\}$.
\end{theorem}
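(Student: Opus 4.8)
The plan is a standard duality (Aubin--Nitsche) argument that trades the extra power of $h$ coming from the $H^4$ regularity of the dual solution against the energy estimate of Theorem \ref{BVP-errest}. Write $e=Q_hu-u_h\in V_h$ and $e_0=Q_0u-u_{h,0}$; since $\|u-Q_0u\|\lesssim h^{k+1}$ is of higher order, it suffices to bound $\|e_0\|$. Comparing the scheme \eqref{wgbvp-scheme} with the identity satisfied by the projection $Q_hu$ (obtained by elementwise integration by parts together with Lemma \ref{commu-prop}) yields an error equation $a_w(e,v)=\ell_u(v)$ for all $v\in V_h$, where the consistency functional $\ell_u$ collects boundary residuals of $u$ tested against the jump quantities $Q_bv_0-v_b$ and $\nabla v_0\cdot\bn_e-v_n$, together with the stabilization $s(Q_hu,v)$.

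Next I would introduce the dual problem $\Delta^2\phi=e_0$ in $\Omega$ with $\phi=\partial\phi/\partial n=0$ on $\partial\Omega$; the assumed $H^4$ regularity gives $\|\phi\|_4\lesssim\|e_0\|$. The first real step is to derive the companion dual-consistency identity $a_w(v,Q_h\phi)=(v_0,\Delta^2\phi)+\ell_\phi(v)$ for all $v\in V_h$, again using $\Delta_wQ_h\phi=\dQ_h\Delta\phi$ and integration by parts, where $\ell_\phi$ is the analogous functional built from $\phi$. Choosing $v=e$ and inserting the primal error equation with test function $Q_h\phi$ gives the central identity
\begin{eqnarray*}
\|e_0\|^2=(e_0,\Delta^2\phi)=a_w(e,Q_h\phi)-\ell_\phi(e)=\ell_u(Q_h\phi)-\ell_\phi(e),
\end{eqnarray*}
so the whole estimate reduces to bounding the two consistency terms.

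For $\ell_\phi(e)$ I would apply the Cauchy--Schwarz inequality in the stabilization and trace seminorms to split off a $\phi$-factor from $\trb{e}$. The $\phi$-factor is a weak Galerkin interpolation/consistency error of $\phi$; because $\phi$ enjoys only $H^4$ regularity, its energy-norm contribution is capped at $h^{\min\{k-1,2\}-\eps/2}=h^{k_0-1-\eps/2}\|\phi\|_{k_0+1}$, which is exactly the origin of $k_0=\min\{k,3\}$ (the degree-limited rate $k-1$ competing with the regularity-limited rate $2$ from $H^4$). Pairing this with $\trb{e}\lesssim h^{k-1-\eps/2}$ from Theorem \ref{BVP-errest} and $\|\phi\|_{k_0+1}\le\|\phi\|_4\lesssim\|e_0\|$ gives $|\ell_\phi(e)|\lesssim h^{k+k_0-2-\eps}\|e_0\|$. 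The term $\ell_u(Q_h\phi)$ factors symmetrically as the $u$-consistency error (order $h^{k-1-\eps/2}$, degree-limited since $u\in H^{k+2}$) tested against the interpolation defect of $Q_h\phi$ (order $h^{k_0-1-\eps/2}\|\phi\|_{k_0+1}$), producing the same bound; the stabilization piece $s(Q_hu,Q_h\phi)$ is controlled identically. Combining, $\|e_0\|^2\lesssim h^{k+k_0-2-\eps}\|e_0\|$, and cancelling one factor of $\|e_0\|$ yields the claim after the triangle inequality with $\|u-Q_0u\|$.

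The main obstacle is the bookkeeping hidden in the two consistency functionals: one must write $\ell_u$ and $\ell_\phi$ out explicitly and then track how the nonstandard stabilization weights $h_T^{-3+\eps}$ and $h_T^{-1+\eps}$ interact with the trace and polynomial-approximation estimates so that every boundary term collapses to the advertised order, while verifying that the $H^4$ regularity of $\phi$ caps the dual factor at $h^{k_0-1-\eps/2}$ rather than the degree-limited $h^{k-1-\eps/2}$. This regularity cap, not the algebra, is what fixes the exponent at $k+k_0-2-\eps$.
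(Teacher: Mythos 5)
Your duality (Aubin--Nitsche) argument is the standard route and coincides with the proof the paper relies on: the paper gives no proof of this theorem itself but defers to \cite{Zhang2015}, where the argument is precisely the dual-problem computation you outline, with the assumed $H^4$ regularity of $\phi$ capping the dual-side approximation order and thereby producing $k_0=\min\{k,3\}$. Your exponent bookkeeping ($h^{k-1-\eps/2}\cdot h^{k_0-1-\eps/2}=h^{k+k_0-2-\eps}$, plus the higher-order term $\|u-Q_0u\|\lesssim h^{k+1}$) matches the stated rate, so the proposal is correct and essentially the same approach.
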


\subsection{Error estimates for the eigenvalue problem}
In this section, we derive the estimates for the eigenpair of the problem \eqref{biharmonic-eq1}-\eqref{biharmonic-eq3}.  Let $K:L^2(\Omega)\rightarrow H_0^2(\Omega)$ be the solution operator of
the Biharmonic problem \eqref{BVP-eq1} and \eqref{BVP-eq2} and $K_h:L^2(\Omega)\rightarrow
V_h$ be the weak Galerkin numerical solution operator.  Naturally, we can extend the operators $K$ and $K_h$ from $L^2(\Omega)$ to $V$.
\begin{lemma}\label{operator-approx}
The operators $K$ and $K_h$ have the following estimate
\begin{eqnarray*}
\lim_{h\rightarrow 0}\|K_h-K\|_V=0,
\end{eqnarray*}
where $\|\cdot\|_V$ denote the operator norm from $V$ to $V$. \textcolor{red}{(Is this norm different from the one before?  The notation is the same though...)}
\end{lemma}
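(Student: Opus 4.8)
The plan is to bound the operator norm by a fixed positive power of $h$, so that convergence to zero is immediate. First I would reduce the problem to data in $L^2(\Omega)$. Both $Kf$ and $K_hf$ solve the biharmonic problem \eqref{BVP-eq1}--\eqref{BVP-eq2}, respectively its weak Galerkin discretization \eqref{wgbvp-scheme}, with a right-hand side determined by the interior component $f_0$ of $f$, and the norm equivalences established above give $\|f_0\|_{L^2(\Omega)}\lesssim \|f\|_V$. Hence it suffices to prove $\|(K-K_h)g\|_V\lesssim h^{\beta}\|g\|_{L^2(\Omega)}$ for some $\beta>0$ and all $g\in L^2(\Omega)$, and then take the supremum over the unit ball $\{\,f:\|f\|_V\le 1\,\}$ to obtain $\|K_h-K\|_V\lesssim h^{\beta}$.

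Next I would invoke elliptic regularity for the clamped biharmonic operator on the polygonal domain $\Omega$: there is $\alpha\in(0,1]$ with $\|Kg\|_{H^{2+\alpha}(\Omega)}\lesssim \|g\|_{L^2(\Omega)}$. The key point is that $Kg$ is strictly smoother than $H^2$, even though $g$ is only $L^2$. I would then repeat the convergence analysis underlying Theorem \ref{BVP-errest}, but with the solution merely in $H^{2+\alpha}(\Omega)$ rather than in $H^{k+2}(\Omega)$. The commutativity property $\Delta_wQ_h(Kg)=\dQ_h\Delta(Kg)$ of Lemma \ref{commu-prop}, together with the interpolation error of $Q_h$ on $H^{2+\alpha}$ data, yields $\trb{Kg-Q_h(Kg)}_1\lesssim h^{\alpha-c\eps}\|Kg\|_{H^{2+\alpha}(\Omega)}$, and the coercivity and boundedness of $a_w(\cdot,\cdot)$ (quasi-optimality in the $\|\cdot\|_V$ norm) upgrade this to $\|Kg-K_hg\|_V\lesssim h^{\beta}\|g\|_{L^2(\Omega)}$ with $\beta=\alpha-c\eps$.

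Choosing $\eps$ small enough that $\beta>0$, the reduction of the first paragraph gives $\|K_h-K\|_V\lesssim h^{\beta}\to 0$ as $h\to 0$, which is the claim. The main obstacle is precisely the reduced-regularity error estimate: Theorem \ref{BVP-errest} is stated only for $H^{k+2}(\Omega)$ data, so I must verify that the weak Galerkin analysis degrades gracefully to $H^{2+\alpha}(\Omega)$ data and, crucially, that the resulting exponent $\beta=\alpha-c\eps$ stays positive despite the $\eps$-dependent losses coming from the stabilization $s(\cdot,\cdot)$ and the possibly small polygonal regularity index $\alpha$. This bookkeeping---tracking the interpolation and consistency errors in the $\|\cdot\|_V$ norm across the fractional-order space $H^{2+\alpha}(\Omega)$---is where the genuine work lies, whereas the reduction to $L^2$ data and the final supremum over the unit ball are routine.
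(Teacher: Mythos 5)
Your overall strategy---bound $\|K_h-K\|_V$ by a fixed positive power of $h$ using an a priori error estimate combined with elliptic regularity---is the same as the paper's, but you take a deliberately harder route and leave its crucial step unresolved. The paper simply takes $f$ with $\|f\|_V=1$, assumes full $H^4$ regularity of the clamped biharmonic problem (an assumption made explicitly elsewhere in the same appendix for the dual problem), and applies Theorem \ref{BVP-errest} at its lowest admissible order $k=2$ to get $\|Kf-K_hf\|_V\lesssim h^{1-\eps}\|Kf\|_4\lesssim h^{1-\eps}\|f\|$, which tends to zero since $\eps<1$. No fractional-order spaces and no reworking of the convergence analysis are needed.

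The genuine gap in your version is the estimate $\|Kg-K_hg\|_V\lesssim h^{\alpha-c\eps}\|g\|_{L^2(\Omega)}$ for solutions only in $H^{2+\alpha}(\Omega)$ with $\alpha\in(0,1]$. You correctly observe that Theorem \ref{BVP-errest} is stated only for $u\in H^{k+2}(\Omega)$ with $k\ge 2$, i.e.\ at least $H^4$, so it cannot be invoked for $H^{2+\alpha}$ data, and you then defer the needed reduced-regularity analysis as ``where the genuine work lies'' without carrying it out. That analysis is not routine: the projection error bounds for $Q_0$ and $Q_b$, the trace inequalities on $\partial T$ entering the stabilizer $s(\cdot,\cdot)$, and the commutativity identity of Lemma \ref{commu-prop} all have to be re-derived in fractional-order Sobolev spaces, and one must check that the total loss is genuinely of the form $c\eps$ with $c$ independent of $\alpha$ so that $\beta=\alpha-c\eps$ can be made positive. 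As written, the proposal asserts the conclusion of this step rather than proving it, so the argument is incomplete. If you are willing to assume $H^4$ regularity as the paper implicitly does, the entire difficulty disappears and the proof reduces to two lines; if you want the result on general polygons with only $H^{2+\alpha}$ regularity, you must actually supply the reduced-regularity version of Theorem \ref{BVP-errest}.
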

\begin{proof}
Since $V$ is a Hilbert space, it is equivalent to verify that
\begin{eqnarray*}
\lim_{h\rightarrow 0}\sup_{\|f\|_V=1}\|K f-K_h f\|_V=0.
\end{eqnarray*}
For any $f\in V$ with $\|f\|_V=1$, let $u=Kf$ and $u_h=K_hf$.
From Theorem \ref{BVP-errest} and the regularity of the
problem \eqref{BVP-eq1} and \eqref{BVP-eq2}, we have
\begin{eqnarray*}
\|u- u_h\|_V\lesssim h^{1-\eps}\|u\|_4 \lesssim h^{1-\eps}\|f\|.
\end{eqnarray*}
For $0\le \eps<1$, we have
\begin{eqnarray*}
\|K f- K_h f\|_V\lesssim h^{1-\eps} \|f\|_V,
\end{eqnarray*}
which converges to zero as $h \rightarrow 0$. Thus, the
proof is completed.
\end{proof}

\begin{lemma}
The operator $T_h: V\mapsto V$ is compact. \text{\textcolor{red}{(Please define $T_h$})}
\end{lemma}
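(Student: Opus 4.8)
The plan is to reduce the claim to the elementary fact that a bounded operator with finite-dimensional range is compact. Once the definition of $T_h$ is supplied (cf.\ the marginal query), the essential point is that $T_h$ is the discrete solution operator of the weak Galerkin scheme, so that for every $f\in V$ the image $T_h f$ lies in the finite-dimensional space $V_h\subset V$. Hence the range of $T_h$ is contained in $V_h$, which has finite dimension for each fixed $h$.

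First I would establish that $T_h$ is a bounded linear operator from $V$ into $V$. Linearity is inherited directly from the linearity of the defining discrete variational problem. For boundedness, $T_h f\in V_h$ is characterized by a well-posed problem whose right-hand side depends linearly and continuously on $f$; combining the coercivity and boundedness of $a_w(\cdot,\cdot)$ with the discrete Poincar\'e inequality (Lemma \ref{dPoincare}) yields an estimate of the form $\|T_h f\|_V\lesssim \|f\|_V$, with a constant that may depend on $h$ but is finite for fixed $h$. This is all that is needed, since no uniformity in $h$ is claimed in the statement.

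Second, since $T_h$ is bounded and its range lies in the finite-dimensional subspace $V_h$, it is an operator of finite rank. Every bounded finite-rank operator on a Hilbert space is compact: the image of the unit ball of $V$ is a bounded subset of the finite-dimensional space $V_h$, hence precompact by the Heine--Borel theorem, and therefore $T_h$ maps bounded sets to relatively compact sets. Thus $T_h$ is compact. The main obstacle here is purely notational rather than analytic: one must first pin down the precise definition of $T_h$ and confirm that its range sits inside $V_h$. Once that is granted, compactness is immediate from finite-dimensionality, in sharp contrast to the compactness of the continuous solution operator $K$, which instead relies on the compact embedding $V\hookrightarrow W$ together with the regularity of the biharmonic problem.
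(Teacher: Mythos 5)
Your proof is correct, and its central idea coincides with the paper's: since $T_h$ maps into the finite-dimensional space $V_h$, compactness reduces to boundedness of $T_h$ on $V$. Where you differ is in how that boundedness is obtained. The paper factorizes $T_h=\tilde T_h Q_h$, where $\tilde T_h$ is the restriction of $T_h$ to $V_h$ (automatically compact, indeed bounded, by finite-dimensionality), using the observation $(Q_0 f,v_0)=(f,v_0)$ to justify the factorization; the only thing left to check is then that the projection $Q_h$ is bounded from $V$ to $V$, which the paper gets from the norm equivalence $\|\cdot\|_V\simeq\|\cdot\|_2$ on $V_0$. You instead bound $T_h$ directly from the well-posedness of the discrete problem, i.e.\ coercivity of $a_w(\cdot,\cdot)$ plus a discrete Poincar\'e-type inequality giving $\trb{T_hf}^2=a_w(T_hf,T_hf)=(f,(T_hf)_0)\lesssim\|f\|\,\trb{T_hf}$, hence $\|T_hf\|_V\lesssim\|f\|_V$. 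Both routes are legitimate; the paper's factorization isolates the only nontrivial continuity statement (that of $Q_h$ across the nonconforming space $V=V_0+V_h$), while your stability argument is more self-contained and in fact yields a bound uniform in $h$, which is stronger than what the lemma requires. One small caveat: the discrete Poincar\'e inequality you cite (Lemma \ref{dPoincare}) is stated in the paper for the second-order (Laplacian) WG space, so for the biharmonic setting you should either invoke its $H^2$-analogue or simply use the $L^2$-bound implicit in the norm $\|\cdot\|_V$; this does not affect the validity of the argument.
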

\begin{proof}
Denote $\tilde T_h$ the restriction of $T_h$ on $V_h$. Since $V_h$ is finite dimensional, $\tilde T_h$ is compact. Notice that $(Q_0 f, v_0)=(f,v_0)$, so $T_h=\tilde T_h Q_h$. In order to prove that $T_h$ is compact, we just need to verify that $Q_h$ is bounded.  For any $w\in V_h$, $Q_h w=w$. For $w\in V_0$, by the equivalence of $\|\cdot\|_V$ and $\|\cdot\|_2$, we can conclude that
\begin{eqnarray*}
\|Q_h w\|_V \le \|Q_h w-w\|_V+\|w\|_V \lesssim \|w\|_2+\|w\|_V
\lesssim C\|w\|_V,
\end{eqnarray*}
which completes the proof.
\end{proof}

Next we review some notations in the spectral approximation theory. We denote by $\sigma(T)$ the spectrum of $T$ and by $\rho(T)$ the resolvent set. $R_z(T)=(z-T)^{-1}$ represents the resolvent operator.  Let $\mu$ be a nonzero eigenvalue of $T$ with algebraic multiplicities $m$. Let $\Gamma$ be a circle in the complex plane which centers at $\mu$, lies in $\rho(T)$, and does not enclose any other eigenvalues in $\sigma(T)$. The corresponding spectral projection is
\begin{eqnarray*}
E=E(\mu)=\frac{1}{2\pi {\rm i}}\int_\Gamma R_z(T) dz.
\end{eqnarray*}
$R(E)$ represents the range of $E$, which is the space of generalized
eigenvectors.  We have the following lemma
\begin{lemma}
Assume that $w\in H^{k+2}(\Omega)\cap H_0^2(\Omega)$ for any $w\in R(E)$.
Then the following estimate holds true,
\begin{eqnarray*}
\|T-T_h|_{R(E)}\|_V\le Ch^{k-1}.
\end{eqnarray*}
\end{lemma}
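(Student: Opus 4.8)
The final statement to prove is the operator approximation bound
\begin{equation*}
\|T - T_h|_{R(E)}\|_V \le C h^{k-1},
\end{equation*}
where $R(E)$ is the space of generalized eigenvectors, each of which lies in $H^{k+2}(\Omega)\cap H_0^2(\Omega)$ by hypothesis. The plan is to reduce this operator estimate, restricted to the finite-dimensional space $R(E)$, to the elementwise source-problem error estimate already available in Theorem~\ref{BVP-errest}. First I would identify $T$ and $T_h$ with the solution operators $K$ and $K_h$ of the biharmonic boundary value problem \eqref{BVP-eq1}--\eqref{BVP-eq2}, since an eigenpair of the biharmonic eigenvalue problem corresponds via $u = \lambda K u$ to an eigenpair of $K$, and likewise $u_h = \lambda_h K_h u_h$ for the discrete operator. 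With this identification, $\|T-T_h|_{R(E)}\|_V$ is controlled by the worst-case source-problem error over the (finite-dimensional, hence norm-equivalent) unit ball of $R(E)$.

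The key steps in order would be: (i) Because $R(E)$ is finite-dimensional and spanned by generalized eigenvectors, the operator norm $\|T-T_h|_{R(E)}\|_V$ equals $\sup_{w\in R(E),\ \|w\|_V=1}\|(T-T_h)w\|_V$, and by compactness of the unit sphere this supremum is attained and behaves uniformly over $R(E)$. (ii) For a fixed $w\in R(E)$, write $Tw = Kg$ and $T_hw = K_hg$ for the appropriate right-hand side $g$ determined by $w$, so that $(T-T_h)w = (K-K_h)g$; this is exactly a source-problem error. (iii) Apply Theorem~\ref{BVP-errest}, which gives $\|Ku - K_hu\|_V \lesssim h^{k-1-\eps}\|u\|_{k+2}$ type control, together with the regularity assumption $w\in H^{k+2}(\Omega)$, to obtain $\|(T-T_h)w\|_V \lesssim h^{k-1}\|w\|_{k+2}$. (iv) Finally, on the finite-dimensional space $R(E)$ the norms $\|\cdot\|_{k+2}$ and $\|\cdot\|_V$ are equivalent, so $\|w\|_{k+2}\lesssim \|w\|_V$ with a constant depending only on $R(E)$, yielding the uniform bound $\|(T-T_h)w\|_V\lesssim h^{k-1}\|w\|_V$ and hence the claimed operator estimate.

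The step I expect to be the main obstacle is matching the power of $h$ precisely, namely getting $h^{k-1}$ rather than the $h^{k-1-\eps}$ that appears in the $V$-norm estimate of Theorem~\ref{BVP-errest}. Resolving this requires exploiting the extra regularity available on $R(E)$: since every generalized eigenvector is smooth ($H^{k+2}$) and the space is finite-dimensional, one can absorb the $\eps$-loss, or else invoke a sharper interpolation/approximation estimate valid on the smooth subspace $R(E)$ where the stabilization term does not degrade the rate. I would handle this by carefully tracking which part of the $\|\cdot\|_V$ error comes from the weak-Laplacian approximation versus the stabilizer $s(\cdot,\cdot)$, and arguing that on $R(E)$ the consistency error in the stabilization term is genuinely of order $h^{k-1}$; the norm equivalence on the finite-dimensional $R(E)$ then delivers a clean constant $C$ independent of the particular generalized eigenvector chosen.
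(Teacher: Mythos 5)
Your proposal follows essentially the same route as the paper: apply the source-problem estimate of Theorem~\ref{BVP-errest} to $(T-T_h)w$ using the $H^{k+2}$ regularity of $w\in R(E)$, then use finite-dimensionality of $R(E)$ to bound $\|w\|_{k+2}$ uniformly over the unit sphere in $\|\cdot\|_V$. The ``obstacle'' you flag is real but is not resolved in the paper either --- its proof only delivers $h^{k-1-\eps}$, not the stated $h^{k-1}$ --- so your extra caution about tracking the $\eps$-loss is a legitimate observation rather than a gap in your argument.
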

\begin{proof}
Suppose $w\in R(E)$ with $\|w\|_V=1$. From Theorem \ref{BVP-errest}
and the regularity of biharmonic equation, we can obtain
\begin{eqnarray*}
\|T w-T_h w\|_V\lesssim h^{k-1-\eps}\|T w\|_{k+2}
\lesssim h^{k-1-\eps}\|w\|_{k+2}.
\end{eqnarray*}
Since $R(E)$ is finite dimensional, there is a uniform upper bound
for $\|w\|_{k+2}$, where $w\in R(E)$ with $\|w\|_V=1$,
which completes the proof.
\end{proof}

Since $a(\cdot,\cdot)$ and $a_w(\cdot,\cdot)$ are symmetric,
$T$ and $T_h$ are self-adjoint. In addition, all the conclusions also hold straightforwardly when replacing the the $\|\cdot\|_V$ norm by the $L^2$-norm.  Therefore, based on the theory in \cite{MR1115240}, we have the following estimates.
\begin{theorem}\label{Error_Theorem_Biharmonic}
Let $\lambda_{j,h}$ be the $j$-th eigenvalue of $T_h$ and $u_{j,h}$ be the corresponding eigenvector.
There exist an exact eigenvalue $\lambda_j$ and the corresponding eigenfunction $u_j$ such that, if $u_j\in H^{k+2}(\Omega)\cap H_0^2(\Omega)$, the following error estimates hold
\begin{eqnarray}\label{biharmonic-est1}
&&|\lambda_j-\lambda_{j,h}|\le Ch^{2k-2-2\eps}\|u_j\|_{k+2},
\\\label{biharmonic-est2}
&&\|u_j-u_{j,h}\|_V \le Ch^{k-1-\eps}\|u_j\|_{k+2},
\\\label{biharmonic-est3}
&&\|u_j-u_{j,h}\|\le Ch^{k+k_0-2-\eps}\|u_j\|_{k+2},
\end{eqnarray}
where $k_0=\min\{k,3\}$.
\end{theorem}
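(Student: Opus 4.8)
The plan is to recast the discrete eigenvalue problem in the operator framework of Babuška--Osborn \cite{MR1115240} and to apply the abstract spectral approximation estimates, feeding in the operator bounds already established. First I would identify $T$ with the (extended) solution operator $K$ and $T_h$ with $K_h$, both regarded as operators on the Hilbert space $V=V_0+V_h$ equipped with $\|\cdot\|_V$. Under this identification an exact eigenpair $(\lambda_j,u_j)$ of \eqref{biharmonic-eq1}--\eqref{biharmonic-eq3} corresponds to the eigenpair $(\mu_j,u_j)$ of $T$ with $\mu_j=\lambda_j^{-1}$, and the discrete eigenpair $(\lambda_{j,h},u_{j,h})$ of $T_h$ corresponds to $(\mu_{j,h},u_{j,h})$ with $\mu_{j,h}=\lambda_{j,h}^{-1}$. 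The three preceding lemmas supply exactly the hypotheses required by the abstract theory: $T_h$ is compact, $T$ and $T_h$ are self-adjoint, $\|T-T_h\|_V\to 0$ by Lemma \ref{operator-approx}, and, crucially, $\|(T-T_h)|_{R(E)}\|_V\lesssim h^{k-1-\eps}$.

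Let $E=E(\mu_j)$ be the spectral projection onto $R(E)$ and abbreviate $\varepsilon_h:=\|(T-T_h)|_{R(E)}\|_V$. For the eigenvalue I would invoke the self-adjoint eigenvalue estimate of the abstract theory, which states that for a basis $\{\phi_i\}_{i=1}^{m}$ of $R(E)$,
\[
|\mu_j-\mu_{j,h}|\lesssim \sum_{i,l=1}^{m}\bigl|\bigl((T-T_h)\phi_i,\phi_l\bigr)\bigr|+\varepsilon_h^2 ,
\]
where $(\cdot,\cdot)$ denotes the inner product with respect to which $T$ and $T_h$ are self-adjoint. Each summand of the first term is of order $\varepsilon_h^2$ (this is precisely where the self-adjoint structure is essential), so that $|\mu_j-\mu_{j,h}|\lesssim\varepsilon_h^2$. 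Converting back through $\lambda_j-\lambda_{j,h}=\lambda_j\lambda_{j,h}(\mu_j-\mu_{j,h})$ and using that $\lambda_{j,h}$ stays bounded away from zero, I obtain $|\lambda_j-\lambda_{j,h}|\lesssim\varepsilon_h^2\lesssim h^{2k-2-2\eps}$; the factor $\|u_j\|_{k+2}$ is retained by tracking the constant in Theorem \ref{BVP-errest} when it is applied to the elements of the finite-dimensional space $R(E)$. This yields \eqref{biharmonic-est1}.

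For the eigenfunctions I would use the abstract gap estimate, which bounds the distance between $R(E)$ and the discrete invariant subspace $R(E_h)$ by $dist(R(E),R(E_h))\lesssim\varepsilon_h$; a standard argument then produces, for each $u_{j,h}$, an exact eigenfunction $u_j\in R(E)$ with $\|u_j-u_{j,h}\|_V\lesssim\varepsilon_h\lesssim h^{k-1-\eps}\|u_j\|_{k+2}$, giving \eqref{biharmonic-est2}. The $L^2$ estimate \eqref{biharmonic-est3} follows by repeating the whole argument with $\|\cdot\|_V$ replaced by the $L^2$-norm, as noted before the statement: the relevant operator bound becomes $\|(T-T_h)|_{R(E)}\|\lesssim h^{k+k_0-2-\eps}$, coming from the $L^2$ solution estimate $\|u-u_h\|\lesssim h^{k+k_0-2-\eps}$, and the abstract eigenfunction estimate then delivers $\|u_j-u_{j,h}\|\lesssim h^{k+k_0-2-\eps}\|u_j\|_{k+2}$.

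The main obstacle is verifying that the abstract self-adjoint \emph{quadratic} eigenvalue estimate genuinely applies despite the nonconforming nature of the weak Galerkin space ($V_h\not\subset V_0$). Concretely, one must confirm that $T$ and $T_h$ are self-adjoint on the \emph{same} Hilbert space $V$ with respect to a common inner product, so that the doubling of the convergence rate is legitimate, and that the consistency contributions hidden in the terms $\bigl((T-T_h)\phi_i,\phi_l\bigr)$ are genuinely of order $\varepsilon_h^2$ rather than merely $\varepsilon_h$. Once the common operator setting is secured and these pairings are shown to be second order, the remaining steps are routine bookkeeping of the approximation rates from Theorem \ref{BVP-errest}.
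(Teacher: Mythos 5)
Your proposal follows essentially the same route as the paper: the paper's entire argument is to note that $T$ and $T_h$ are self-adjoint by the symmetry of $a(\cdot,\cdot)$ and $a_w(\cdot,\cdot)$, that the preceding lemmas supply compactness, norm convergence, and the bound $\|(T-T_h)|_{R(E)}\|_V\lesssim h^{k-1-\eps}$, and then to invoke the abstract spectral approximation theory of \cite{MR1115240}, with the $L^2$ estimate obtained by repeating the argument in the $L^2$-norm. Your write-up is in fact more explicit than the paper's (which does not address the nonconforming subtlety you flag either), so the approaches coincide.
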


\subsection{Lower bounds}
In this section, we show that the approximate eigenvalue $\lambda_h$ generated by Algorithm \ref{wg-scheme} is a lower bound of $\lambda$ when the parameter $\eps$ is chosen such that $0<\eps<1$.
\begin{lemma}\label{expansion}
Let $(\lambda,u)$ be the eigenpair of \eqref{biharmonic-eq1} and \eqref{biharmonic-eq3} and $(\lambda_h,u_h)$ be the approximate eigenpair obtained by Algorithm \ref{wg-scheme}.  We have the following identity holds for any $v\in V_h$,
\begin{eqnarray}
\nonumber
\lambda-\lambda_h&=&\|\Delta u-\Delta_w u\|^2+s(u_h-v,u_h-v)-\lambda_h
\|u_0-v_0\|^2-\lambda_h(\|u_0\|^2-\|v_0\|^2)
\\   \label{eqn:lam-identity} &&+2(\Delta u-
\Delta_w v,\Delta_w u_h)-s(v,v).
\end{eqnarray}
\end{lemma}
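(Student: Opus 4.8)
The plan is to verify the identity by a direct algebraic expansion of the two Rayleigh quotients, using the commutation property of Lemma~\ref{commu-prop} and the discrete variational equation \eqref{wg-scheme} as the only nontrivial inputs. First I would fix the two scales dictated by the normalizations. Since $\int_\Omega u^2=1$, the continuous eigenvalue is $\lambda=\|\Delta u\|^2$; since $b_w(u_h,u_h)=1$, the discrete one is $\lambda_h=a_w(u_h,u_h)=\|\Delta_w u_h\|^2+s(u_h,u_h)$. Subtracting gives the starting point $\lambda-\lambda_h=\|\Delta u\|^2-\|\Delta_w u_h\|^2-s(u_h,u_h)$, so the whole task reduces to rewriting this right-hand side in terms of an arbitrary $v\in V_h$.

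Next I would use the commutation property $\Delta_w u=\dQ_h\Delta u$ of Lemma~\ref{commu-prop} to reconcile the genuine Laplacian with the weak Laplacian. Since $\dQ_h$ is the $L^2$-projection onto $P_{k-2}(T)$ and every $\Delta_w\phi$ lies in that space, this yields the orthogonality $(\Delta u,\Delta_w\phi)=(\Delta_w u,\Delta_w\phi)$ for all $\phi\in V_h$, and in particular the Pythagorean splitting $\|\Delta u\|^2=\|\Delta u-\Delta_w u\|^2+\|\Delta_w u\|^2$. The first summand is precisely the consistency term $\|\Delta u-\Delta_w u\|^2$ in the statement, while the second is a genuinely discrete quantity that I can then compare with $\|\Delta_w u_h\|^2$ after the test function $v$ is inserted.

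I would then introduce the free parameter $v\in V_h$ by writing $\Delta_w u=\Delta_w v+\Delta_w(u-v)$ and $u_0=v_0+(u_0-v_0)$, and expand the squares $s(u_h-v,u_h-v)$ and $\lambda_h\|u_0-v_0\|^2$ into their diagonal and cross parts. The cross terms that are linear in $v$ are removed by testing \eqref{wg-scheme} against $w=v$, namely $(\Delta_w u_h,\Delta_w v)+s(u_h,v)=\lambda_h(u_{h,0},v_0)$, together with the normalization relation obtained from $w=u_h$. What survives reassembles into the cross term $2(\Delta u-\Delta_w v,\Delta_w u_h)$ and the pure stabilization and $L^2$ squares, while the leftover diagonal pieces should collapse into the normalization-discrepancy term $-\lambda_h(\|u_0\|^2-\|v_0\|^2)$.

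The step I expect to be the main obstacle is the careful reconciliation of the two normalizations together with the bookkeeping of the interior components. The continuous eigenfunction is normalized through $\|u\|=1$ whereas the discrete one is normalized through $b_w(u_h,u_h)=\|u_{h,0}\|^2$, and $\Delta_w u$ is only the $L^2$-projected surrogate $\dQ_h\Delta u$ of $\Delta u$; consequently every cross term must be kept attached to the correct component ($u_0$ versus $u_{h,0}$), and all $v$-dependent contributions must be absorbed by the single application of \eqref{wg-scheme}. The term $-\lambda_h(\|u_0\|^2-\|v_0\|^2)$ is exactly the device recording this normalization gap, so I would introduce it last, as the remainder required to force the identity to close, and I would double-check at this stage that no residual coupling between $u_0-u_{h,0}$ and $v_0$ is left over.
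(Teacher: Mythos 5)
Your overall route---subtract the two Rayleigh quotients, insert an arbitrary $v\in V_h$, and kill the $v$-linear cross terms by testing \eqref{wg-scheme} at $v$---is the same mechanism as the paper's proof, and each ingredient you invoke (the normalizations $\lambda=\|\Delta u\|^2$ and $\lambda_h=\|\Delta_w u_h\|^2+s(u_h,u_h)$, the orthogonality $(\Delta u,\Delta_w\phi)=(\dQ_h\Delta u,\Delta_w\phi)$ for $\phi\in V_h$) is valid. The plan fails at exactly one point: the first term. If you carry out your Pythagorean split $\|\Delta u\|^2=\|\Delta u-\dQ_h\Delta u\|^2+\|\dQ_h\Delta u\|^2$ and push the algebra through, what you actually obtain is
\begin{align*}
\lambda-\lambda_h&=\|\Delta u-\dQ_h\Delta u\|^2+\|\dQ_h\Delta u-\Delta_w u_h\|^2+s(u_h-v,u_h-v)-\lambda_h\|u_0-v_0\|^2\\
&\quad-\lambda_h(\|u_0\|^2-\|v_0\|^2)+2(\Delta u-\Delta_w v,\Delta_w u_h)-s(v,v),
\end{align*}
so an extra term $\|\dQ_h\Delta u-\Delta_w u_h\|^2$ survives that is absent from the stated right-hand side. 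It cannot be absorbed into $-\lambda_h(\|u_0\|^2-\|v_0\|^2)$, which is already completely determined by the normalizations, and it is generically nonzero: by Lemma \ref{commu-prop} it equals $\|\Delta_w(Q_hu-u_h)\|^2$, which is of the order of $\trb{Q_hu-u_h}^2$, not zero. So the step where you assert the leftovers ``collapse'' is the genuine gap.

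The resolution is that the lemma's first term should read $\|\Delta u-\Delta_w u_h\|^2$ rather than $\|\Delta u-\Delta_w u\|^2$; by the orthogonality above, $\|\Delta u-\Delta_w u_h\|^2=\|\Delta u-\dQ_h\Delta u\|^2+\|\dQ_h\Delta u-\Delta_w u_h\|^2$, which is precisely what your computation produces. The paper's proof sidesteps the issue by completing the square around $\|\Delta u-\Delta_w u_h\|^2$ from the outset (it expands $(\Delta u-\Delta_w u_h,\Delta u-\Delta_w u_h)$ and rearranges), and the later use in Theorem \ref{biharmonic-lowerbound} is consistent with that version, since there the two pieces are regrouped as $\|\Delta u-\dQ_h\Delta u\|^2+\trb{Q_hu-u_h}^2$ after taking $v=Q_hu$. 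Either restate the identity with $\Delta_w u_h$ in the first term, or keep your decomposition and carry $\|\dQ_h\Delta u-\Delta_w u_h\|^2$ explicitly; as written, your proposal targets an identity that is off by exactly that quantity.
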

\begin{proof}
From \eqref{biharmonic-eq1}-\eqref{biharmonic-eq3} and Algorithm \ref{wg-scheme}, we have
\begin{eqnarray*}
&&a(u,u)=\|\Delta u\|^2=\lambda\|u\|^2,
\\
&&a_w(u_h,u_h)=\|\Delta_w u_h\|^2+s(u_h,u_h)=\lambda_h\|u_0\|^2,
\\
&&\|u\|=\|u_0\|=1.
\end{eqnarray*}
Note that,
\begin{eqnarray*}
&&(\Delta u-\Delta_w u_h,\Delta u-\Delta_w u_h)
\\
&=&(\Delta u,\Delta u)+(\Delta_w u_h,\Delta_w u_h)-2(\Delta u,
\Delta_w u_h)
\\
&=&\lambda+\lambda_h-2(\Delta u,\Delta_w u_h)-s(u_h,u_h)
\\
&=&\lambda+\lambda_h-2(\Delta u-\Delta_w v,\Delta_w u_h)
-2(\Delta_w v,\Delta_w u_h)-s(u_h,u_h)
\\
&=&\lambda+\lambda_h-2(\Delta u-\Delta_w v,\Delta_w u_h)
-2\lambda_h(u_0,v_0)+2s(u_h,v)-s(u_h,u_h)
\\
&=&\lambda+\lambda_h-2(\Delta u-\Delta_w v,\Delta_w u_h)
+\lambda_h(u_0-v_0,u_0-v_0)-\lambda_h(u_0,u_0)-\lambda_h(v_0,v_0)
\\
&&+2s(u_h,v)-s(u_h,u_h)
\\
&=&\lambda-\lambda_h-2(\Delta u-\Delta_w v,\Delta_w u_h)
+\lambda_h(u_0-v_0,u_0-v_0)+\lambda_h((u_0,u_0)-(v_0,v_0))
\\
&&+2s(u_h,v)-s(u_h,u_h).
\end{eqnarray*}
Rearranging the above identity and \eqref{eqn:lam-identity} follows directly.
\end{proof}

Next lemma is crucial for deriving the lower bound of eigenvalues and the detailed proof can be found in \cite{Lin2011a}.
\begin{lemma}\label{convergence-rate}
Let $u$ be the eigenfunction of the biharmonic eigenvalue problem \eqref{biharmonic-eq1}-\eqref{biharmonic-eq3}, the following lower bound holds
\begin{eqnarray*}
\|\Delta u-\dQ_h \Delta u\|\ge Ch^{2k}.
\end{eqnarray*}
\end{lemma}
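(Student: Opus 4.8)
The plan is to prove the sharp saturation lower bound $\|\Delta u-\dQ_h\Delta u\|\gtrsim h^{k-1}$ and then observe that the stated estimate follows immediately. Recall from Lemma \ref{commu-prop} that $\dQ_h$ is the piecewise $L^2$-projection onto $P_{k-2}(T)$, so $k-1$ is the natural approximation order for the error $w-\dQ_h w$ with $w:=\Delta u$; since $2k>k-1$, we have $h^{2k}\le h^{k-1}$ for $0<h\le 1$, and a lower bound of order $h^{k-1}$ is therefore stronger than what is claimed. Two structural facts about $w$ drive the argument: by interior elliptic regularity for the biharmonic operator, $w$ is real-analytic in $\Omega$; and $w$ is not a piecewise polynomial of degree $k-2$, because $\Delta w=\lambda u\neq 0$.

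First I would localize. Since $\dQ_h$ acts elementwise and the $L^2$-norm decouples,
\[
\|w-\dQ_h w\|^2=\sum_{T\in\T_h}\|w-\dQ_h w\|_{L^2(T)}^2,
\]
and on each $T$ the local quantity $\|w-\dQ_h w\|_{L^2(T)}$ is exactly the $L^2(T)$-distance from $w$ to $P_{k-2}(T)$. To bound it from below I would Taylor expand $w$ about the centroid $x_T$, writing $w=p^T_{k-2}+r^T_{k-1}+(\text{higher order})$, where $p^T_{k-2}\in P_{k-2}(T)$ is annihilated by $I-\dQ_h$ and $r^T_{k-1}$ is the homogeneous degree-$(k-1)$ part determined by $D^{k-1}w(x_T)$. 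The key point is that $I-\dQ_h$ is \emph{injective} on homogeneous polynomials of degree $k-1$ (these do not lie in $P_{k-2}$); hence, by norm-equivalence on the finite-dimensional reference space and a scaling argument to $\hat T$,
\[
\|(I-\dQ_h)r^T_{k-1}\|_{L^2(T)}\gtrsim h_T^{\,k-1}\,h_T^{\,d/2}\,|D^{k-1}w(x_T)|,
\]
while the remaining terms contribute only at order $h_T^{\,k}$ and are absorbed for $h$ small.

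Summing the local bounds and recognizing $\sum_{T}h_T^{\,d}|D^{k-1}w(x_T)|^2$ as a centroid Riemann sum for $\|D^{k-1}w\|_{L^2(\Omega)}^2$ (convergent because $D^{k-1}w$ is continuous), I would conclude
\[
\|w-\dQ_h w\|\gtrsim h^{k-1}\,\|D^{k-1}w\|_{L^2(\Omega)}\gtrsim h^{k-1},
\]
where $\|D^{k-1}w\|_{L^2(\Omega)}>0$ because otherwise $w$ would be a global polynomial of degree $\le k-2$, contradicting $\Delta w=\lambda u\neq 0$. Finally $h^{k-1}\ge h^{2k}$ for $h\le 1$ yields the stated bound.

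I expect the main obstacle to be the nondegeneracy step: making rigorous that the leading coefficients $D^{k-1}w(x_T)$ are bounded away from zero on a positive fraction of elements, so that the Riemann sum stays bounded below, and that the discarded cross terms and the degree-$\ge k$ remainder cannot cancel the leading contribution. This is precisely where the analyticity of the eigenfunction (ruling out $w\in P_{k-2}$ on any subdomain) and mesh shape-regularity enter, and it is the content of the argument cited in \cite{Lin2011a}.
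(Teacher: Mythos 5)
The paper does not actually prove this lemma: it is quoted from \cite{Lin2011a} without proof, so there is no in-paper argument to compare against. Judged on its own, your proposal is the standard ``saturation'' argument for lower bounds of projection errors and the overall strategy is sound: localize, peel off the degree-$(k-1)$ Taylor term, use that $I-\dQ_h$ is injective on homogeneous polynomials of degree $k-1$ together with scaling, and recover $\|D^{k-1}\Delta u\|_{L^2}$ from a Riemann sum. Note also that your stronger bound $\|\Delta u-\dQ_h\Delta u\|\gtrsim h^{k-1}$ is not merely a bonus --- it is the exponent the paper actually needs in Theorem \ref{biharmonic-lowerbound}, which requires $\|\Delta u-\dQ_h\Delta u\|^2\gtrsim h^{2k-2}$; the exponent $2k$ in the lemma as printed appears to be carried over from the second-order case (where the \emph{squared} quantity is of order $h^{2k}$) and, taken literally, would be too weak for that application.

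Three points in your sketch need to be made rigorous, and you have correctly identified the first as the main obstacle. (i) You cannot apply the reverse triangle inequality element by element and then sum: on elements where $|D^{k-1}w(x_T)|$ is small the lower bound $\|(I-\dQ_h)r^T_{k-1}\|-\|\mathrm{rem}\|$ is negative and squaring it proves nothing. The fix is to discard those elements: since $w=\Delta u$ is real-analytic in $\Omega$ and $D^{k-1}w\not\equiv 0$, there is an open set $\omega$ compactly contained in $\Omega$ with $|D^{k-1}w|\ge c_0>0$ on $\omega$; restrict the sum to elements inside $\omega$, where the leading term dominates the remainder for $h$ small, and the sum over those elements alone already yields $h^{2k-2}\,|\omega|$ up to constants. (ii) This restriction to an interior subdomain is also needed for a second reason: on a polygonal domain you only have $u\in H^{k+2}$ up to the boundary, not the pointwise $C^{k}$ control of $w$ that your Taylor-remainder bound uses, whereas interior analyticity gives it on $\omega$. (iii) The nondegeneracy $D^{k-1}w\not\equiv 0$ should be argued slightly differently from what you wrote: if $w\in P_{k-2}$ globally, then $u=\lambda^{-1}\Delta w$ is a polynomial, and degree counting in $\Delta^2u=\lambda u$ with $\lambda\neq 0$ forces $u=0$, contradicting $\|u\|=1$; by analyticity the same conclusion follows if $D^{k-1}w$ vanishes on any open subset. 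With these repairs the argument closes, and it matches in spirit the result of \cite{Lin2011a} that the paper invokes.
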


Now we are ready to show the lower bound of eigenvalues.
\begin{theorem}\label{biharmonic-lowerbound}
Let $\lambda_j$ and $\lambda_{j,h}$ be the $j$-th exact eigenvalue and its corresponding weak Galerkin numerical approximation.  Assume the corresponding eigenvector $u\in H^{k+2}(\Omega)\cap H_0^2(\Omega)$.  If the mesh size $h$ is small enough, the following estimate holds
\begin{eqnarray*}
\text{\textcolor{red}{$h^{2k-2} \lesssim$ }} \lambda_j-\lambda_{j,h}\lesssim Ch^{2k-2-2\eps}.
\end{eqnarray*}
\end{theorem}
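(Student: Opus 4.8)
The plan is to separate the two inequalities, since they come from completely different mechanisms. The upper bound $\lambda_j-\lambda_{j,h}\lesssim Ch^{2k-2-2\eps}$ is already supplied by Theorem~\ref{Error_Theorem_Biharmonic}, namely estimate \eqref{biharmonic-est1}, because $\|u_j\|_{k+2}$ is a fixed constant for the fixed eigenfunction $u_j$; nothing new is required there. All the genuine work lies in the lower bound $h^{2k-2}\lesssim\lambda_j-\lambda_{j,h}$, which simultaneously establishes the lower-bound property $\lambda_{j,h}\le\lambda_j$ and the sharp rate.

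For the lower bound I would start from the expansion identity \eqref{eqn:lam-identity} of Lemma~\ref{expansion}, specialized to the test function $v=Q_hu$ (writing $u=u_j$, $u_h=u_{j,h}$, etc.). The virtue of this choice is that each of the six terms on the right-hand side is either nonnegative, provably of higher order than $h^{2k-2}$, or identically zero. Specifically: the stabilization term $s(u_h-Q_hu,u_h-Q_hu)$ is nonnegative and may simply be discarded; the two mass contributions $-\lambda_h\|u_0-Q_0u\|^2$ and $-\lambda_h(\|u_0\|^2-\|Q_0u\|^2)=-\lambda_h\|u_0-Q_0u\|^2$ are both governed by the $L^2$-projection error $\|u-Q_0u\|^2\lesssim h^{2k+2}$; and the cross term $2(\Delta u-\Delta_wQ_hu,\Delta_w u_h)$ \emph{vanishes identically}, since the commutativity property of Lemma~\ref{commu-prop} gives $\Delta_wQ_hu=\dQ_h\Delta u$, and $\Delta u-\dQ_h\Delta u$ is $L^2$-orthogonal to $\Delta_wu_h\in P_{k-2}(T)$. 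The last term $-s(Q_hu,Q_hu)$ I would estimate by the approximation properties of $Q_0,Q_b$ and the trace inequality; here the weak scalings $h_T^{-3+\eps}$ and $h_T^{-1+\eps}$ (rather than $h_T^{-3},h_T^{-1}$) are exactly what forces these contributions down to $\lesssim h^{2k-2+\eps}$, one power of $h^{\eps}$ above the target. This is the sole place the hypothesis $\eps>0$ is used.

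The crux is the lower bound on the leading term $\|\Delta u-\Delta_wu_h\|^2$. I would deliberately avoid the triangle inequality (which is useless, since $\trb{Q_hu-u_h}\lesssim h^{k-1-\eps}$ is \emph{larger} than the quantity I want to bound from below) and instead use orthogonality: because $\Delta_wu_h\in P_{k-2}(T)$ on every element while $\dQ_h\Delta u$ is the $L^2$-projection of $\Delta u$ onto $P_{k-2}(T)$, the Pythagorean identity yields $\|\Delta u-\Delta_wu_h\|^2\ge\|\Delta u-\dQ_h\Delta u\|^2$. Assembling the pieces gives
\begin{equation*}
\lambda_j-\lambda_{j,h}\ge\|\Delta u-\dQ_h\Delta u\|^2-Ch^{2k+2}-Ch^{2k-2+\eps},
\end{equation*}
and Lemma~\ref{convergence-rate} closes the estimate: since the two correction terms carry strictly higher powers of $h$, for $h$ small the right-hand side is at least $\tfrac12\|\Delta u-\dQ_h\Delta u\|^2\gtrsim h^{2k-2}>0$.

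I expect this final step to be the main obstacle. The whole argument rests on Lemma~\ref{convergence-rate} furnishing a lower bound of order $h^{k-1}$ for $\|\Delta u-\dQ_h\Delta u\|$; the exponent as printed there ($h^{2k}$) would give only $h^{4k}$ after squaring, which is dominated by the stabilization remainder $h^{2k-2+\eps}$, and then neither the proof nor the claimed rate survives. I would therefore need to confirm, via the saturation-type argument of \cite{Lin2011a}, that the sharp lower bound for the $L^2$-projection error of $\Delta u$ onto piecewise $P_{k-2}$ is indeed $\gtrsim h^{k-1}$ for a generic non-polynomial eigenfunction, and to check that this truly dominates the stabilization leftover; every remaining estimate is a routine projection or trace bound.
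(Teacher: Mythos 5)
Your proof follows essentially the same route as the paper's: Lemma \ref{expansion} specialized to $v=Q_hu$, the commutativity/orthogonality cancellation of the cross term $(\Delta u-\dQ_h\Delta u,\Delta_w u_h)$, higher-order bounds on the mass and stabilization remainders with $\eps>0$ supplying the extra power of $h$, and the saturation lower bound of Lemma \ref{convergence-rate} — and your diagnosis that the exponent in that lemma must read $h^{k-1}$ (so that its square is $h^{2k-2}$) rather than the printed $h^{2k}$ is exactly the correction the paper's final step tacitly assumes. The only slip is the identity $\|u_0\|^2-\|Q_0u\|^2=\|u_0-Q_0u\|^2$, which is false as written: the clean route uses $\|u_0\|=\|u\|=1$ together with the Pythagorean property of the $L^2$ projection to get $\|u_0\|^2-\|Q_0u\|^2=\|u-Q_0u\|^2$, while the other mass term $\|u_0-Q_0u\|^2$ also involves the eigenfunction error and is only $O(h^{2k+2k_0-4-2\eps})$ rather than $O(h^{2k+2})$ — but since $k_0\ge 2$ both contributions remain of higher order than $h^{2k-2}$, so your conclusion stands.
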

\begin{proof}
Take $v=Q_h u$ in Lemma \ref{expansion}. From the commutative
property in Lemma \ref{commu-prop}, we have
\begin{eqnarray*}
\Delta_w v=\dQ_h \Delta u,
\end{eqnarray*}
and
\begin{eqnarray*}
\lambda-\lambda_h&=&\|\Delta u-\Delta_w u\|^2+s(u_h-v,u_h-v)-\lambda_h
\|u_0-v_0\|^2-\lambda_h(\|u_0\|^2-\|v_0\|^2)
\\&&+2(\Delta u-
\Delta_w v,\Delta_w u_h)-s(v,v)
\\
&=&\|\Delta u-\dQ_h \Delta u\|^2+\trb{Q_hu-u_h}-\lambda_h
\|Q_0 u-u_0\|^2-\lambda_h(\|u_0\|^2-\|Q_0 u\|^2)
\\&&+2(\Delta u-
\dQ_h\Delta u,\Delta_w u_h)-s(Q_h u,Q_h u).
\end{eqnarray*}

Since $\Delta_w u_h\in P_{k-2}(T)$, we can obtain
\begin{eqnarray*}
(\Delta u-\dQ_h\Delta u,\Delta_w u_h)=0.
\end{eqnarray*}
From the error estimate (\ref{eig-est2}) and (\ref{eig-est3}),
we have
\begin{eqnarray*}
\trb{Q_h u-u_h}^2\le\|Q_h u-u_h\|_V^2\lesssim h^{2k-2-2\eps}
\end{eqnarray*}
and
\begin{eqnarray*}
\|Q_0 u-u_0\|^2\lesssim h^{2k+2k_0-4-2\eps},
\end{eqnarray*}
where $k_0=\min\{k,3\}$.  Moreover, we have
\begin{eqnarray*}
\|Q_0 u-u_0\|^2&=&(u_0+Q_0 u,u_0-Q_0 u)
\\
&=&((u-u_0)+(u-Q_0 u),(u-u_0)-(u-Q_0 u))
\\
&=&\|u-u_0\|^2-\|u-Q_0 u\|^2
\\
&\lesssim& h^{2k+2k_0-4-2\eps},
\end{eqnarray*}
and
\begin{eqnarray*}
s(Q_h u,Q_h u) &=&\sumT h_T^{-3+\eps}\langle Q_b Q_0 u-Q_b u,
Q_b Q_0 u-Q_b u\rangle_{\partial T}
\\
&&+\sumT h_T^{-1+\eps}\langle \nabla Q_0 u\cdot\bn_e-Q_b(\nabla
u\cdot \bn_e),
\nabla Q_0 u\cdot\bn_e-Q_b(\nabla
u\cdot \bn_e)\rangle_{\partial T},
\\
&\le& \sumT h_T^{-3+\eps}\|Q_0 u -u\|_{\partial T}^2
+\sumT h_T^{-1+\eps}\|\nabla(Q_0 u -u)\|_{\partial T}^2
\\
&\lesssim& Ch^{2k-2+2\eps}.
\end{eqnarray*}
From Lemma \ref{convergence-rate}, $\lambda_h\|u_0-Q_0 u\|^2$,
$\lambda_h(\|u_0\|^2-\|Q_0 u\|^2)$, $(\Delta u-\dQ_h\Delta u,\Delta_w u_h)$,
and $s(Q_hu, Q_hu)$ are of higher order comparing with $\|\Delta u-\dQ_h\Delta u\|^2$, therefore,
\begin{eqnarray*}
h^{2k-2}\lesssim \trb{Q_h u-u_h}^2+\|\Delta u-\dQ_h\Delta u\|^2
\lesssim h^{2k-2-2\eps}
\end{eqnarray*}
is the dominant term, if $h$ is sufficiently small, which completes the proof.
\end{proof}

\bibliographystyle{siam}
\bibliography{library}

\end{document}